\newenvironment{manualtheorem}[1]{%
  \manualtheoreminner
}{\endmanualtheoreminner}
\newtheorem{theorem}{Theorem}[section]
\newtheorem{lemma}[theorem]{Lemma}
\newtheorem{cor}[theorem]{Corollary}
\newtheorem{prop}[theorem]{Proposition}
\theoremstyle{definition}
\newtheorem{remark}[theorem]{Remark}
\numberwithin{equation}{theorem}
\newcommand{\PP}{\mathbb{P}}
\newcommand{\QQ}{\mathbb{Q}}
\newcommand{\RR}{\mathbb{R}}
\newcommand{\ZZ}{\mathbb{Z}}
\newcommand{\hfal}{h_{\mathrm{Fal}}}
\newcommand{\cNT}{c_{\mathrm{NT}}}
\DeclareMathOperator{\Sp}{Sp}
\DeclareMathOperator{\Stab}{Stab}
\DeclareMathOperator{\Vol}{Vol}
 \newcommand{\Tangli}[1]{} 
\begin{document}
\title{Uniform Mordell-Lang Plus Bogomolov}
\author{Tangli Ge}

\address{Department of Mathematics\\
Brown University\\
Box 1917\\
Providence, RI 02912\\
U.S.A.}
\email{\url{tangli_ge@brown.edu}}

\thanks{Work supported in part by funds from NSF grant
   DMS-2100548 and DMS-1759514}

\begin{abstract}
	In this paper, we prove a uniform version of Poonen's "Mordell-Lang Plus Bogomolov" theorem \cite{PoonenMLB}, based on Vojta's method. We mainly generalize R\'emond's work on large points to allow an extra $\epsilon$-neighborhood. The part on small points follows from \cite{GGK}. 
	\end{abstract}
    
\maketitle


\section{Introduction}
Throughout, we work over an algebraic closure $\bar\QQ$ of the rationals $\QQ$. Let $A$ be an abelian variety defined over $\bar\QQ$, and let $A(\bar\QQ)$ be the group of algebraic points on $A$. Let $\Gamma$ be a finitely generated subgroup of $A(\bar\QQ)$. The \emph{division group} $\Gamma'$ of $\Gamma$ is defined as
\[
\Gamma':=\{x\in A(\bar\QQ)|\text{ there exists }n\geq1\text{ such that } nx\in\Gamma\}.
\]

 The Mordell-Lang conjecture, proved in the case of abelian varieties by Faltings \cite{Faltings83,Faltings}, Vojta \cite{Vojta91} and Raynaud \cite{Raynaud:ML}, states that, if an integral subvariety $X$ of $A$ is not a \emph{coset}, i.e. a translate of an abelian subvariety by a closed point, then the intersection $X(\bar\QQ)\cap\Gamma'$ is not Zariski dense in $X$. 
 
 Now assume moreover that $A$ is equipped with a N\'eron--Tate height $\hat h:A(\bar\QQ)\rightarrow \RR_{\geq0}$. The Bogomolov conjecture proved by Ullmo \cite{Ullmo98} and S. Zhang \cite{Zhang98}, states that if $X$ is not a \emph{torsion coset} which is a translate of an abelian subvariety by a torsion point, then there is some $\epsilon>0$ such that the set of small points 
 \[
\{P\in X(\bar\QQ):\hat h(P)<\epsilon\}
\]
is not Zariski dense in $X$.

Later, Poonen \cite{PoonenMLB} (also by S. Zhang in \cite{Zhang00}) shows that the Mordell-Lang conjecture and the Bogomolov conjecture, together with an equidistribution theorem, imply a stronger result, which trivially encompasses both conjectures.  For $\epsilon>0$, define the \emph{$\epsilon$-neighborhood} $\Gamma'_\epsilon$ of the division group $\Gamma'$ as 
\[
\Gamma'_\epsilon:=\{\gamma+z:\gamma\in\Gamma', z\in A(\bar\QQ), \hat h(z)<\epsilon\}.
\]
What Poonen and Zhang proved  is the following:
{\begin{theorem}[Poonen--Zhang]
	 Let $X\subseteq A$ be an integral subvariety which is not a coset, and let $\Gamma\leq A(\bar\QQ)$ be a finitely generated subgroup. Then there is some $\epsilon=\epsilon(X,A,\Gamma)>0$ such that the intersection $X(\bar\QQ)\cap \Gamma'_\epsilon$ is not Zariski dense in $X$.
\end{theorem}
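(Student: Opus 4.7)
The plan is to follow Poonen's original strategy, which combines three ingredients: the Mordell--Lang theorem for $\Gamma'$, the Bogomolov conjecture of Ullmo--Zhang for small points of $X$, and the equidistribution theorem of Szpiro--Ullmo--Zhang that interpolates between them.

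First I would carry out the standard reduction to the case where $H:=\Stab_A(X)^0$ is trivial, via the quotient isogeny $A\to A/H$; cosets, division groups, and the N\'eron--Tate height behave compatibly under this quotient, the image of $X$ remains a non-coset, and pulling back a coset in $A/H$ would force $X$ itself to be a coset in $A$. Once $X$ has finite stabilizer and is not a coset, it cannot be a torsion coset either (a torsion coset with finite stabilizer is a single point, which is a coset), so the Bogomolov conjecture supplies some $\delta>0$ for which $\{P\in X:\hat h(P)<\delta\}$ is not Zariski dense in $X$.

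I would then argue by contradiction: suppose $X\cap\Gamma'_\epsilon$ is Zariski dense in $X$ for every $\epsilon>0$. Fix a number field $K$ over which $A$, $X$, and generators of $\Gamma$ are defined. The density assumption allows one to extract a sequence $P_n\in X\cap\Gamma'_{1/n}$ that is generic in the strong sense that no proper closed subvariety of $X$ contains infinitely many $P_n$. Decompose $P_n=\gamma_n+z_n$ with $\gamma_n\in\Gamma'$ and $\hat h(z_n)<1/n$. Enlarging $K$ (allowed to depend on $n$) to contain $\gamma_n$ makes the Galois orbit of $P_n$ a translate, by $\gamma_n$, of the Galois orbit of the small point $z_n$, and these orbits all remain inside $X$ because $X$ is defined over the original base field.

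The crux is then to invoke the Szpiro--Ullmo--Zhang equidistribution theorem on the small points $z_n$: after passing to a subsequence, the normalized counting measures of their Galois orbits converge weakly to the canonical measure on some coset of an abelian subvariety. Pushing forward by the translations $x\mapsto\gamma_n+x$ and using that the supports all lie in $X(\CC)$, one obtains in the limit a positive-dimensional coset contained in $X$, contradicting the triviality of $\Stab_A(X)$. The main obstacle is exactly this limiting step, because the $\gamma_n$ may escape to infinity in $A(\CC)$; the standard device is to pass to differences $\gamma_n-\gamma_m\in\Gamma'$, which after a further subsequence and modulo torsion remain bounded archimedeanly, and then apply Mordell--Lang to an auxiliary subvariety of $A\times A$ built from these differences to pin down the limiting coset and produce the contradiction.
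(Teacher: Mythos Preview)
The paper does not give its own proof of this statement: Theorem~1.1 is quoted as Poonen's result and the paper only names the three ingredients (Mordell--Lang, Bogomolov, equidistribution) before moving on. Your proposal is a faithful reconstruction of exactly that strategy from \cite{PoonenMLB}, so in that sense it matches what the paper attributes to Poonen. The outline is essentially correct; you have also correctly identified that the delicate point is controlling the translations $\gamma_n$ in the limit, and your suggested device (pass to differences, use that $\Gamma'/\text{tors}$ is finitely generated to extract archimedean convergence, then feed an auxiliary variety into Mordell--Lang) is the standard way this is handled, though as written it is a sketch rather than a proof.

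It is worth noting, however, that the paper's own route to this statement is via the much stronger Theorem~1.2, whose proof is entirely different from Poonen's and from your proposal. There is no contradiction argument and no equidistribution; instead the paper runs Vojta's method directly. Large points in $X^\circ\cap\Gamma'_\epsilon$ are counted using R\'emond's explicit Vojta and Mumford inequalities (Theorem~\ref{theorem_inequalities}), with extra care to show that a small $\epsilon$-perturbation does not spoil the angle conditions (this is where Lemmas~\ref{lemma_anglebounds}--\ref{lemma_cover_by_balls} enter). Small points are handled by the New Gap Principle (Theorem~\ref{NGP}) from \cite{GGK}, which replaces the Bogomolov/equidistribution input. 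The payoff of the paper's approach is uniformity: $\epsilon$ and the bound depend only on $(g,d)$ and $\rho$, whereas Poonen's equidistribution argument is inherently non-effective and gives an $\epsilon$ depending on $X,A,\Gamma$ with no control.
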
}

A standard recursive application of the above theorem leads to the following equivalent version:
{\begin{manualtheorem}{1.1'}[Poonen--Zhang] \label{poonen}
	Let $X\subseteq A$ be an integral subvariety, and let $\Gamma\leq A(\bar\QQ)$ be a finitely generated subgroup. Then there is some $\epsilon=\epsilon(X,A,\Gamma)$ such that the intersection $X(\bar\QQ)\cap\Gamma_\epsilon'$ is a finite union of $Y_i(\bar\QQ)\cap\Gamma_\epsilon'$, where $\{Y_i\}_i$ is a finite set of cosets in $X$.
\end{manualtheorem}}

Define the \emph{special locus} of $X$, denoted by $\Sp(X)$, as the union of positive-dimensional cosets in $X$, which is Zariski closed as shown by Kawamata \cite{Kawamata}.  Denote the open complement by $X^\circ:=X-\Sp(X)$. Then all cosets in $X^\circ$ are just points and Theorem \ref{poonen} implies the finiteness of the set $X^\circ(\bar\QQ) \cap \Gamma'_\epsilon$. 

A motivating question for this paper is: \emph{can we choose $\epsilon$ above to be independent of the choice of $\Gamma$?} The answer is yes and indeed we can get a more uniform result, combining the uniform Mordell-Lang conjecture and the uniform Bogomolov conjecture proved in \cite{GGK}. 

Let $L$ be a symmetric (i.e. $[-1]^*L\cong L$) ample line bundle on $A$ which induces the associated N\'eron-Tate height $\hat h=\hat h_L:A(\bar\QQ)\rightarrow\RR_{\geq0}$. Fix the following notations:
 \[r:=\dim X,\,g:=\dim A ,\,d:=\deg_L X ,\, l:=\deg_L A.\]

The main result of this paper is the following:

\begin{theorem}\label{uniform_poonen}
	There exist positive constants $\epsilon=\epsilon(r,g,d)$ and $c=c(r,g,d)$\footnote{See the remark \ref{remark1}(2) below.} with the following property. For any abelian variety $(A,L)$, any integral subvariety $X\subseteq A$, and any finitely generated subgroup $\Gamma\leq A(\bar\QQ)$ of rank $\rho$, we have
	\[
	\# ( X^\circ(\bar\QQ)\cap \Gamma'_\epsilon ) \leq c^{1+\rho}.
	\]
\end{theorem}

\begin{remark}\label{remark1}
(1) Expressions such as $c=c(r,g,d)$ mean that the constant $c$ only depends on $r,g,d$. 

(2) We can actually prove a stronger version with $\epsilon$ replaced by a uniform multiple of certain normalized Faltings height of $A$; see \S\ref{section_further comments} for details.

(3) The degree $l$ of $A$ only appears in the middle of the proof, which is shown to become unnecessary, mainly because we can pass to the case where $X$ generates $A$. The dependence on $r=\dim X$ in the above theorem can also be removed easily, by simply taking
		\[
	\epsilon(g,d):=\min_{0\leq r\leq g} \{\epsilon(r,g,d)\},\quad c(g,d):=\max_{0\leq r\leq g}\{c(r,g,d)\}.
		\] Though redundant in the result, the index $r$ actually shows its importance in the inductive arguments later and we decide to keep this stratification.\Tangli{The index $r$ in my opinion is very important so I add a footnote to the statement of the above theorem instead. }

(4) The functions $\epsilon(r,g,d), c(r,g,d)$ are constructed in an increasing lexicographic order which we now describe. The set $\{(r,g,d)\}$ is totally ordered by the following rule: $(r_1,g_1,d_1)<(r_2,g_2,d_2)$ if either $r_1<r_2$; or $r_1=r_2, g_1<g_2$; or $r_1=r_2,g_1=g_2,d_1<d_2$. Then $\epsilon(r_1,g_1,d_1), c(r_1,g_1,d_1)$ are defined before $\epsilon(r_2,g_2,d_2), c(r_2,g_2,d_2)$ if $(r_1,g_1,d_1)<(r_2,g_2,d_2)$.

(5) The result of the above theorem is weaker if we decrease $\epsilon$ or increase $c$, but since our goal is to prove the existence, we will freely weaken the results, to ease our notations. By (3), without loss of generality we can and we do always make the following assumptions
\begin{itemize}
	\item $\epsilon$ decreases in all three variables;
	\item $c$ increases in all three variables;
	\item $c(r,g,d_1+d_2)\geq c(r,g,d_1)+c(r,g,d_2)$.
\end{itemize}  Then we may use the result even when $X$ is not irreducible or equidimensional. For example, if $X=X_1\cup X_2$ with $X_1,X_2$ irreducible of dimension $r$ and degree $d_1,d_2$ respectively, then with $\epsilon=\epsilon(r,g,d_1+d_2)$, by the third bullet point above we have 
\[
\begin{split}
    \# (X^\circ(\bar\QQ)\cap \Gamma'_\epsilon) &\leq\#(X_1^\circ(\bar\QQ) \cap \Gamma'_\epsilon) +\#(X_2^\circ(\bar\QQ)\cap \Gamma'_\epsilon)\\
    &\leq c(r,g,d_1)^{1+\rho}+c_2(r,g,d_2)^{1+\rho}\leq c(r,g,d_1+d_2)^{1+\rho}.
\end{split}
\]
\end{remark}

The proof of Theorem \ref{uniform_poonen} is based on Vojta's method, which has a dichotomy of large and small points in terms of their N\'eron--Tate heights. For the small points, we invoke a result of our previous work \cite{GGK} joint with Gao and K\"uhne, on a version of the uniform Bogomolov conjecture called the \emph{New Gap Principle}; see \S\ref{section_small_points}. The main part of this paper is to generalize the work of R\'emond \cite{remond-decompte, remond-inegalite} and David-Philippon \cite{DP07} on the large points to allow an extra $\epsilon$-neighborhood. Many ideas in the proof are borrowed from their work.

Remark that our proof is different from the proofs of Poonen and  Zhang. Their proofs assume the Mordellic part (the case when $\Gamma$ is finitely generated) of the Mordell--Lang conjecture, dive into the proof of the Bogomolov conjecture and argue by contradiction using a more careful analysis of the equidistribution of almost division points (see \cite[Theorem 1.1]{Zhang00}). It seems impossible to derive a uniform result from their approach. Our proof is closer to the approach of R\'emond \cite{Remond:MLpB}, in which he establishes the Mordell--Lang plus Bogomolov for semiabelian varieties without assuming equidistribution. Needless to say, the uniformity requires a more careful treatment.

Theorem \ref{uniform_poonen} can be improved to a slightly stronger version, in the flavor of Theorem \ref{poonen}, as follows, which is shown in \S\ref{section_finiteness_of_cosets}. 
{\begin{manualtheorem}{1.2'}\label{uniform_poonen2}
		There exist positive constants $\epsilon=\epsilon(g,d)$ and $c=c(g,d)$ with the following property. For any abelian variety $(A,L)$, any integral subvariety $X\subseteq A$, and any finitely generated subgroup $\Gamma\leq A(\bar\QQ)$ of rank $\rho$, the intersection $X(\bar\QQ)\cap\Gamma_\epsilon'$ is contained in the set of $\bar\QQ$-points of a union of at most $c^{1+\rho}$ many cosets in $X$.
\end{manualtheorem}}
Note that Theorem \ref{uniform_poonen2} encompasses both the uniform Mordell--Lang conjecture and the uniform Bogomolov conjecture proved in \cite{GGK}, but it does not follow directly from them.
\medskip
\begin{center}
    \textbf{Notations and Conventions}
\end{center}
\medskip

\begin{tabular}{ll}
    $\bar \QQ$   & an algebraic closure of $\QQ$.\\
    $A$          & an abelian variety over $\bar\QQ$.\\
    $A(\bar\QQ)$ & the group of $\bar\QQ$-points on $A$.\\
    $L$          & a symmetric ample line bundle on $A$.\\
    $\hat h$     & the N\'eron--Tate height $\hat h_L:A(\bar\QQ)\rightarrow                      \RR_{\geq0}$ associated to $L$.\\
    $\Gamma$     & a finitely generated subgroup of $A(\bar\QQ)$.\\
    $X$          & an integral (irreducible and reduced) subvariety of $A$.\\
    $X^\circ$    & the open complement of the special locus of $X$.\\
    $\rho$       & the rank of $\Gamma$ as an abelian group.\\
    $r$          & dimension of $X$.\\
    $g$          & dimension of $A$.\\
    $d$          & degree of $X$ with respect to $L$.\\
    $l$          & degree of $A$ with respect to $L$.\\
    $\langle\cdot,\cdot\rangle$ & the inner product on $A(\bar\QQ)\otimes_\ZZ                             \RR$ induced by $\hat h$.\\
    $|\cdot|$    &  magnitude associated to the inner product; so $|P|^2=\hat                       h(P)$ for $P\in A(\bar\QQ)$.\\
    $h(X)$       & height of $X$ with respect to the canonical adelic metric                 on $L$; see \S\ref{section_large_points}.\\
    $\hfal(A)$   & the stable Faltings height of $A$; see \cite{Faltings83}.
\end{tabular}

\medskip

\section{Technical Lemmas}
In this section, we prove several easy lemmas about Euclidean spaces. The reader shall feel free to skip this section and come back only when a lemma is invoked.
\begin{lemma}\label{lemma_anglebounds}
	Let $V$ be a real vector space with an inner product $\langle\cdot,\cdot \rangle$. For $\theta\in (0,\pi)$, there exists $0<\delta=\delta(\theta)<1$ with the following property: if $v_1,v_2\in V$ satisfy $|v_1|\geq c>0$ and $|v_2-v_1|\leq \delta \cdot c$, then the angle between $v_1,v_2$ is at most $\theta$.
\end{lemma}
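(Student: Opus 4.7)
The plan is to reduce to pure two-dimensional Euclidean geometry in the subspace spanned by $v_1$ and $v_2$, and extract an explicit lower bound on $\cos\alpha$ (where $\alpha \in [0,\pi]$ denotes the angle between $v_1$ and $v_2$) that tends to $1$ as $\delta \to 0^+$.

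First I would dispense with the well-definedness issue: since $\delta < 1$, the reverse triangle inequality gives
\[
|v_2| \geq |v_1| - |v_2 - v_1| \geq c - \delta c = (1-\delta)c > 0,
\]
so $v_2 \neq 0$ and $\alpha$ is well-defined. Next I would apply the law of cosines to the triangle with vertices $0, v_1, v_2$:
\[
\cos \alpha = \frac{|v_1|^2 + |v_2|^2 - |v_2 - v_1|^2}{2|v_1||v_2|}.
\]
Using the elementary inequality $|v_1|^2 + |v_2|^2 \geq 2|v_1||v_2|$, this yields the clean bound
\[
\cos \alpha \geq 1 - \frac{|v_2 - v_1|^2}{2|v_1||v_2|} \geq 1 - \frac{\delta^2 c^2}{2 \cdot c \cdot (1-\delta)c} = 1 - \frac{\delta^2}{2(1-\delta)},
\]
after substituting the hypothesis $|v_2 - v_1| \leq \delta c$, the assumption $|v_1| \geq c$, and the lower bound on $|v_2|$ derived above.

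To finish, I would note that the right-hand side tends to $1$ as $\delta \to 0^+$, so given $\theta \in (0,\pi)$ (hence $\cos\theta < 1$) I can pick $\delta = \delta(\theta) \in (0,1)$ small enough that $1 - \frac{\delta^2}{2(1-\delta)} \geq \cos\theta$. Since $\cos$ is strictly decreasing on $[0,\pi]$, this forces $\alpha \leq \theta$, as desired. There is no real obstacle here; the lemma is a soft geometric statement and the only mild care required is to confirm $v_2 \neq 0$ before speaking of the angle, which is precisely why the hypothesis $\delta < 1$ is included.
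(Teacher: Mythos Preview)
Your proof is correct and follows essentially the same route as the paper's: both directly bound $\cos\alpha$ from below via elementary inequalities in the plane spanned by $v_1,v_2$. The only cosmetic differences are that the paper supplies an explicit formula $\delta = 1 - \tfrac{1}{2-\cos\theta}$ and uses the inner-product expansion $\langle v_1,v_2\rangle = |v_1|^2 + \langle v_1, v_2-v_1\rangle$ with Cauchy--Schwarz, whereas you use the law of cosines together with AM--GM (which incidentally yields a sharper, quadratic-in-$\delta$ bound); neither difference is material for the lemma as stated.
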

\begin{proof}
	This is clear geometrically: if one looks out from the origin to $v_1$, then a small enough ball at $v_1$ will stay within the angle of vision. To prove it, take
	$\delta:=1-\frac{1}{2-\cos\theta}$.
	Then
	\[
		\frac{\langle v_1,v_2\rangle}{|v_1||v_2|}=\frac{|v_1|^2}{|v_1||v_2|}+\frac{\langle v_1,v_2-v_1 \rangle}{|v_1||v_2|}\geq 1-\frac{|v_1||v_2-v_1|}{|v_1||v_2|}\geq 1-\frac{\delta\cdot c}{c-\delta\cdot c}=\cos\theta.
	\]
	So the angle between $v_1,v_2$ is at most $\theta$.
\end{proof}

\begin{lemma}\label{lemma_cover_by_cones}
	Let $V$ be a finite-dimensional real vector space with an inner product $\langle\cdot,\cdot \rangle$, of dimension $\rho$. For any $c>1$, the vector space can be covered by at most $(1+\sqrt{8c})^{\rho}$ regions, such that for any $v_1,v_2$ in a same region, we have 
	\[
	\langle v_1,v_2\rangle \geq (1-\frac{1}{c})|v_1||v_2|.
	\]
\end{lemma}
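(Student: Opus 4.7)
The plan is to reduce the inner product condition to a metric condition on the unit sphere and then use a standard volume packing argument. Since the inequality $\langle v_1,v_2\rangle\ge (1-1/c)|v_1||v_2|$ is invariant under positive scaling of either argument, it is equivalent to a statement about the unit vectors $u_i:=v_i/|v_i|$: namely $\langle u_1,u_2\rangle\ge 1-1/c$. The idea is to cover the unit sphere $S:=\{u\in V:|u|=1\}$ by a finite family of metric balls of a well-chosen radius, and then take as a region the cone in $V$ lying over each ball (throwing $0$ into any one region).

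First I would record the elementary identity $\langle u_1,u_2\rangle=1-\tfrac12|u_1-u_2|^2$ for unit vectors, which shows that if $|u_1-u_2|\le 2r$ then $\langle u_1,u_2\rangle\ge 1-2r^2$. Choosing $r:=1/\sqrt{2c}$ therefore forces $\langle u_1,u_2\rangle\ge 1-1/c$ whenever $u_1,u_2$ lie in a common closed ball of radius $r$ centered on $S$.

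Next I would bound the number of such balls needed to cover $S$ by the standard packing trick: pick a maximal $r$-separated subset $N\subset S$, so the open balls of radius $r/2$ around the points of $N$ are pairwise disjoint and all contained in the ball of radius $1+r/2$ around the origin of $V$. Comparing $\rho$-dimensional volumes gives $|N|\,(r/2)^\rho\le (1+r/2)^\rho$, hence
\[
|N|\le \bigl(1+2/r\bigr)^\rho=\bigl(1+\sqrt{8c}\bigr)^\rho.
\]
By maximality, the closed balls of radius $r$ around the points of $N$ cover $S$.

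Finally I would lift this covering back to $V$: for each $w\in N$ let $R_w:=\{v\in V\setminus\{0\}:|v/|v|-w|\le r\}$, put $0$ into one of the $R_w$, and observe that these at most $(1+\sqrt{8c})^\rho$ regions cover $V$ and, by the first step, satisfy the desired inner product inequality. The main point is to make the radius/covering constants match up as $(1+\sqrt{8c})^\rho$; once $r=1/\sqrt{2c}$ is chosen the remaining obstacle is only bookkeeping, so no serious difficulty is anticipated.
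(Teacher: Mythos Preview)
Your proposal is correct and essentially identical to the paper's proof. The paper phrases the radius as $\sin\theta$ with $\theta=\tfrac12\arccos(1-1/c)$, but since $\sin^2\theta=\tfrac{1-\cos 2\theta}{2}=\tfrac{1}{2c}$ this is exactly your $r=1/\sqrt{2c}$; the greedy/maximal separated set construction and the volume comparison are the same, and both arrive at $(1+2/r)^\rho=(1+\sqrt{8c})^\rho$.
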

\begin{proof}
	This is \cite[Corollaire 6.1]{remond-decompte}. We include the proof here for completeness.  

	Let $\theta:=\frac{1}{2}\arccos(1-\frac{1}{c})$. Denote by $B(x,R)$, the closed ball in $V$ of radius $R$ centered at $x$. We aim to find a covering of the \emph{unit sphere} $S$ by small pieces, such that the angle between any two points in a piece is at most $2\theta$. Then we can just cover $V$ by the one-sided cones spanned by these pieces, such that for any two vectors $v_1,v_2$ in a cone, \[\langle v_1,v_2\rangle\geq\cos(2\theta)|v_1||v_2|=(1-\frac{1}{c})|v_1||v_2|.\]
	
	The strategy is to cover $S$ first by small balls with centers on $S$, of radius $\sin\theta$. Each piece  cut out by the intersection of $S$ and a small ball has the required property. Indeed, the distance between any $v_1,v_2$ in a piece is at most $2\sin\theta$, which by an easy geometric argument in the isosceles triangle of side lengths $1,1,2\sin\theta$, implies that the angle between the vectors $v_1,v_2$ is at most $2\theta$.
	
	Now we construct our cover inductively as follows. Assume that we have chosen centers $x_1,..,x_n$, and that $S\not\subseteq\bigcup_{i=1}^n B(x_i,\sin\theta).$
	Then we pick $x_{n+1}$ in $S$ that is not yet covered. By the compactness of $S$, the procedure stops at a finite step. Assume eventually, we pick out centers $x_1,\ldots,x_N$, and no more points can be picked. 
	
	Notice that by our choice, the distance between any $x_i,x_j$ for $i\neq j$ is greater than $\sin\theta$. So in particular, we have
	$B(x_i,\frac{1}{2}\sin\theta)\cap B(x_j,\frac{1}{2}\sin\theta)=\emptyset.$
	Meanwhile,	$\bigcup_{i=1}^N B(x_i,\frac{1}{2}\sin\theta)\subseteq B(0,1+\frac{1}{2}\sin\theta).$
	Since $\Vol(B(0,R))=R^\rho\cdot \Vol(B(0,1))$, we get by comparing the volumes that $N\leq \left((1+\frac{1}{2}\sin\theta) / \frac{1}{2}\sin\theta\right)^\rho= (1+\sqrt{8c})^\rho,$ where the last equality follows from $\sin\theta=\sqrt{(1-\cos(2\theta))/2}$ and $\cos(2\theta)=1-\frac{1}{c}$.

\end{proof}

\begin{lemma}\label{lemma_cover_by_balls}
Let $V$ be a finite-dimensional real vector space with an inner product $\langle\cdot,\cdot\rangle$, of dimension $\rho$. Then a ball of radius $c$ can be covered by at most $(1+2c/c')^{\rho}$ balls of radius $c'$.
\end{lemma}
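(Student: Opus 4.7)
The plan is to mimic the volume-packing argument used in Lemma \ref{lemma_cover_by_cones}. I want to choose a maximal collection of points in $B(0,c)$ that are pairwise separated by distance at least $c'$, and then use maximality to show the associated $c'$-balls cover $B(0,c)$, while using disjointness of half-sized balls to bound the number of points.

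Concretely, I would start by picking points $x_1,x_2,\dots$ in $B(0,c)$ inductively, at each stage choosing $x_{n+1}\in B(0,c)$ with $|x_{n+1}-x_i|\geq c'$ for all $i\leq n$, as long as such a point exists. Compactness of $B(0,c)$ guarantees the process terminates after finitely many steps, say at $N$; after termination, every point of $B(0,c)$ lies within distance $c'$ of some $x_i$, since otherwise it could have been chosen as $x_{N+1}$. Hence $B(0,c)\subseteq\bigcup_{i=1}^{N}B(x_i,c')$.

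To bound $N$, I would observe that the smaller balls $B(x_i,c'/2)$ are pairwise disjoint (by the triangle inequality and the separation condition) and are all contained in $B(0,c+c'/2)$. Taking volumes, and using that $\Vol(B(0,R))=R^{\rho}\Vol(B(0,1))$, gives
\[
N\cdot (c'/2)^{\rho}\leq (c+c'/2)^{\rho},
\]
so that $N\leq (1+2c/c')^{\rho}$, yielding the stated covering bound.

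There is no real obstacle here; the argument is a routine standard-position packing/covering dichotomy, and the only thing to be careful about is noting that maximality (rather than any sort of direct constructive covering) is what ensures the union of the $B(x_i,c')$ covers $B(0,c)$. The proof is essentially a copy of the second half of the proof of Lemma \ref{lemma_cover_by_cones}, specialized to balls in $V$ in place of spherical caps, so I would present it very briefly.
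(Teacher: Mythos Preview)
Your proposal is correct and follows essentially the same approach as the paper: pick a maximal $c'$-separated set of points in $B(0,c)$, use maximality to get the covering, and compare volumes of the disjoint half-radius balls inside $B(0,c+c'/2)$ to obtain $N\leq(1+2c/c')^{\rho}$. The paper's proof is in fact a one-paragraph version of exactly this argument.
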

\begin{proof}
See also \cite[Lemme 6.1]{remond-decompte}.

	Same idea as the proof of Lemma \ref{lemma_cover_by_cones}. Exhaust points $x_1,...,x_N\in B(0,c)$ so that the distance between each pair is at least $c'$. Then $B(0,c)$ is covered by the balls of radius $c'$ centered at $c_i$ for $i=1,\ldots,N$. Also, we have $\bigcup_{i=1}^N B(x_i,\frac{1}{2}c')\subseteq B(0,c+\frac{1}{2}c')$. Comparing the volumes, we get
	$N\leq \left(  (c+\frac{1}{2}c')/\frac{1}{2}c' \right)^\rho=(1+2c/c')^\rho.$
\end{proof}

\medskip

\section{Large Points}\label{section_large_points}
In this section, we review results of R\'emond and apply them to our setting for a uniform treatment of large points. Many ideas are due to R\'emond and our job is to carefully take the $\epsilon$-neighborhood into account.

We assume $L$ to be moreover \emph{very ample} and induce a projectively normal closed immersion into some projective space, for this section. It is not much harder to show the general case, since if $L$ is ample then $L^{\otimes4}$ satisfies our assumption, see for example \cite{milneAV}. We have $h^0(A,L)=l/g!$ (recall that $l=\deg_L A$); see \cite[\S2]{GGK}. So $A$ can be embedded in $\PP^n$ with $n=l/g!-1$. 

The symmetric line bundle $L$ is endowed with a unique canonical adelic metric; the corresponding adelic metrized line bundle is denoted by $\bar L$. We can then define the height $h(X)$ of a subvariety $X\subseteq A$ with respect to $\bar L$ using the arithmetic intersection theory. See \cite[\S9]{CL21} for details.

\subsection{Inequalities}
There are two important constants that appear in the work of R\'emond, namely $\cNT$ and $h_1$. Let $h:\PP^n(\bar\QQ)\rightarrow \RR$ be the logarithmic Weil height. The first one $\cNT$ is a bound arising from the construction of N\'eron-Tate heights on the abelian variety. It satisfies $|\hat h(P)-h(P)|\leq \cNT$ for any $P\in A(\bar\QQ)$. The second one $h_1$ is the Weil height of the polynomials defining the addition and subtraction on the abelian variety. It is known that there is a constant $c'=c'(g,l)$ such that
\begin{equation}\label{bound_for_constants}
 \tag{*}
	\cNT,h_1\leq c'\max\{1,\hfal(A)\}.
\end{equation}
See \cite[(6.41)]{DP07} and \cite[(8.4)(8.7)]{DGH21} for details.

The N\'eron-Tate height $\hat h$ induces an inner product $\langle\cdot,\cdot \rangle$ on the vector space $A(\bar\QQ)\otimes_\ZZ \RR$, so that for any point $P\in A(\bar\QQ)$, we have $\langle P,P\rangle=\hat h(P)$. Write $|\cdot|$ for the induced norm.

Let $D_X:X^{r+1}\rightarrow A^{r}$ (recall that $r=\dim X$) be the morphism defined by $(x_0,\ldots,x_{r})\mapsto (x_1-x_0,\ldots,x_r-x_0)$. The following theorem by R\'emond gives explicit generalized Vojta's inequality and Mumford's inequality.
\begin{theorem}[R\'emond]
\label{theorem_inequalities}
	Let $X\subseteq A$ be an integral subvariety. There exist constants $c_1=c_1(r,g,d,l)>1$ and $c_2=c_2(r,g,d,l)>0$ with the following property. 
	\begin{enumerate}
\item (Vojta's inequality)  If $P_0,\ldots,P_{r}\in X^\circ (\bar\QQ)$ satisfy 
	\[
	\left\langle P_i, P_{i+1} \right\rangle \geq (1-\frac{1}{c_1})|P_i||P_{i+1}|\,\quad\text{and} \quad|P_{i+1}|\geq c_1|P_i|,
	\]for any $i$, then
	\[
	|P_0|^2\leq c_2\max\{1,h(X),\hfal(A)\}.
	\]
\item (Mumford's inequality) Suppose $(P_0,\ldots,P_r)$ is an isolated $\bar\QQ$-point in the fiber of $D_X:X^{r+1}\rightarrow A^{r}$. If 
	\[
	\left\langle P_0, P_i \right\rangle \geq (1-\frac{1}{c_1})|P_0||P_i|\,\quad\text{and} \quad\big||P_0|-|P_{i}|\big|\leq \frac{1}{c_1}|P_0|,
	\]for any $i$, then
	\[
	|P_0|^2\leq c_2\max\{1,h(X),\hfal(A)\}.
	\]
	\end{enumerate}
\end{theorem}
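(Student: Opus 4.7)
The plan is to follow Vojta's method in the quantitative form developed by Rémond in \cite{remond-decompte, remond-inegalite}. Both inequalities share the same skeleton: (i) construct, via an arithmetic Siegel's lemma, an auxiliary nonzero global section of a line bundle on a product of copies of $X$ whose arithmetic height is controlled by $h(X)$ and $\hfal(A)$; (ii) use the angle and size hypotheses on the $P_i$ to force that section to have \emph{large index} at $(P_0, \ldots, P_r)$; (iii) apply a product theorem (for Vojta) or an arithmetic Bézout inequality (for Mumford) to derive an upper bound on $|P_0|^2$. The arithmetic intersection theory of Bost--Gillet--Soulé/Philippon on $\PP^n$ and the arithmetic Hilbert--Samuel theorem are the tools that make the constants effective.

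For Vojta's inequality, I would work on $X^{r+1} \subseteq A^{r+1} \hookrightarrow (\PP^n)^{r+1}$ and choose a multi-degree $\mathbf{d} = (d_0, \ldots, d_r)$ so that the quantities $d_i |P_i|^2$ are approximately constant in $i$ — possible to arrange up to rounding because of the geometric growth $|P_{i+1}| \ge c_1 |P_i|$. Arithmetic Siegel's lemma on $\bigotimes_i \pr_i^* L^{\otimes d_i}|_{X^{r+1}}$ produces a nonzero section $s$ whose sup-norm is controlled via the arithmetic Hilbert--Samuel theorem in terms of $h(X^{r+1})$, which reduces to $h(X)$ and $\hfal(A)$ (the latter entering through the height $h_1$ of the group law, via $(*)$). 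The angle hypothesis, combined with Cauchy--Schwarz estimates for the quadratic form $\sum_i d_i \hat h(x_i) - 2\sum_{i<j} \sqrt{d_i d_j} \langle x_i, x_j\rangle$ and the calibration of $\mathbf{d}$, forces $s$ to vanish to high order at $(P_0, \ldots, P_r)$. Faltings' product theorem in Rémond's explicit form then contradicts the existence of such an $s$ unless $|P_0|^2 \le c_2 \max\{1, h(X), \hfal(A)\}$.

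For Mumford's inequality, the isolation of $(P_0, \ldots, P_r)$ in $D_X^{-1}(P_1 - P_0, \ldots, P_r - P_0)$ provides a stronger local hypothesis that allows a more direct attack, avoiding the product theorem. I would pull back a divisor representing $\sum_{i>0} \hat h(x_i - x_0)$ along $D_X$, which is generically finite at that point, and apply the arithmetic Bézout theorem to the intersection with the point $(P_0, \ldots, P_r)$. The hypotheses $||P_0| - |P_i|| \le c_1^{-1}|P_0|$ and the near-parallelism of the $P_i$ force the geometric contribution of this intersection to be bounded below by a constant multiple of $|P_0|^2$, while the arithmetic side is bounded above by a constant times $\max\{1, h(X), \hfal(A)\}$.

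The main obstacle is quantitative bookkeeping: I expect the arguments above, taken individually, to be by now standard, but keeping the constants $c_1$ and $c_2$ dependent only on the discrete invariants $(r, g, d, l)$ requires careful propagation of bounds through the Siegel's lemma volume estimate, the product theorem, and the arithmetic Bézout inequality, while absorbing all dependence on $A$ into the $\hfal(A)$ term on the right-hand side via $(*)$. Since the statement here is exactly that proved by Rémond, I would ultimately cite \cite{remond-decompte, remond-inegalite} for the precise constants and focus my own effort on the $\epsilon$-neighborhood modifications needed in later sections.
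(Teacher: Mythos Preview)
Your proposal is correct and matches the paper's approach: the paper simply cites R\'emond directly (Th\'eor\`eme~1.1 of \cite{remond-inegalite} for Vojta's inequality and Proposition~3.4 of \cite{remond-decompte} for Mumford's), noting only that \eqref{bound_for_constants} lets one replace $\cNT$ and $h_1$ by $\hfal(A)$. Your detailed sketch of the Siegel's lemma/product theorem/arithmetic B\'ezout machinery is a faithful summary of what lies behind those citations, and your closing remark that you would ultimately cite R\'emond and use $(*)$ to absorb the constants is exactly what the paper does.
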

\begin{proof}
	Vojta's inequality is \cite[Th\'eor\`eme 1.1]{remond-inegalite} and Mumford's inequality is \cite[Proposition 3.4]{remond-decompte}. Note that we can remove $\cNT,h_1$ from them by \eqref{bound_for_constants}. 
\end{proof}

The following proposition is similar to R\'emond \cite[Proposition 3.3]{remond-decompte}. It will be used in Proposition \ref{prop_large_points} to ensure the extra condition in Mumford's inequality.
\begin{prop}\label{prop_degree_bound_for_isolation}
Let $\Xi\subseteq X^\circ(\bar\QQ)$ be a set of points.  Assume there is $P_0\in X(\bar\QQ)$ such that for any $P_1,\ldots,P_r\in \Xi$, the point $(P_0,P_1,\ldots,P_r)$ is not isolated in the fiber of $D_X: X^{r+1}\rightarrow A^r$. Then $\Xi$ is contained in the set of $\bar\QQ$-points of a Zariski closed subset $X'\subsetneq X$ with $\deg_L X'< d^{r+2}$.
\end{prop}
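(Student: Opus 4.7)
The plan is to recast the fiber condition as a statement about intersections of translates of $X$, use it via a greedy selection in $\Xi$ to produce a positive-dimensional subvariety $C\subseteq X$ through $P_0$ constraining every element of $\Xi$, and then to construct $X'$ from $C$ and bound its degree by iterated Bezout. Concretely, the fiber of $D_X$ at $(P_0,P_1,\ldots,P_r)$ projects isomorphically via the first coordinate onto
\[
V(P_1,\ldots,P_r)\;:=\;X\cap\bigcap_{i=1}^{r}(X+P_0-P_i),
\]
so non-isolation at $(P_0,P_1,\ldots,P_r)$ says exactly that the irreducible component of $V(P_1,\ldots,P_r)$ through $P_0$ is positive-dimensional.

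Starting from $V_0:=X$ with $\dim_{P_0}V_0=r$, I would greedily pick $P_1,P_2,\ldots\in\Xi$ so that the local dimension $\dim_{P_0}V_k$ of $V_k:=X\cap\bigcap_{i\le k}(X+P_0-P_i)$ strictly drops at each step. The hypothesis forbids $r$ consecutive drops: $r$ successful drops would yield $(P_1,\ldots,P_r)\in\Xi^r$ with $\dim_{P_0}V_r=0$, contradicting non-isolation. Consequently the procedure stalls at some $k\le r-1$, producing an irreducible component $C$ of $V_k$ through $P_0$ with $\dim C\ge 1$ such that, for every $P\in\Xi$, the intersection $V_k\cap(X+P_0-P)$ has local dimension at $P_0$ equal to $\dim C$; this forces $C\subseteq X+P_0-P$, equivalently $C+P-P_0\subseteq X$, for every $P\in\Xi$.

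I would then set
\[
X'\;:=\;\{Q\in X : C+Q-P_0\subseteq X\},
\]
a Zariski-closed subset of $X$ containing $\Xi$. If $X'=X$, then fixing any $c\in C$ the translate $c-P_0+X$ is an irreducible $r$-dimensional subvariety of $X$ and hence equals $X$, so $c-P_0\in\Stab_A(X)$; thus $C-P_0\subseteq\Stab_A(X)$, $\dim\Stab_A(X)\ge 1$, and $P_0+\Stab_A(X)^0$ is a positive-dimensional coset in $X$ through $P_0$, contradicting $P_0\in X^\circ$. Hence $X'\subsetneq X$ and $\Stab_A(X)$ is finite.

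For the degree bound I would iteratively choose $c_1,c_2,\ldots\in C$ so that the running intersection $W_i:=X\cap(X+P_0-c_1)\cap\cdots\cap(X+P_0-c_i)$ strictly shrinks at every step; finiteness of $\Stab_A(X)$ together with the definition of $X'$ ensures that whenever $W_i\supsetneq X'$ some $c\in C$ strictly cuts $W_i$, since any $Q\in W_i\setminus X'$ fails $C+Q-P_0\subseteq X$ at some $c\in C$. The process terminates in at most $r+1$ steps (the dimension drops from $r$), with $W_m=X'$ at termination because then $W_m\subseteq X+P_0-c$ for every $c\in C$. Iterated Bezout in the projective embedding then yields $\deg_L X'=\deg_L W_m\le d^{m+1}\le d^{r+2}$. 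The main technical obstacle I expect lies in this last step: the intersections are taken inside the abelian variety and are generally not proper in the ambient $\PP^n$, so one must appeal to a Bezout-type bound adapted to subvarieties of $A$ or carefully track the dimension drops so that a refined Bezout in $\PP^n$ applies.
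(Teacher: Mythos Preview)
Your overall strategy—recasting non-isolation as positive local dimension of an intersection of translates of $X$, then greedily reducing this dimension until it stalls—is exactly the paper's. The execution has two genuine gaps.

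First, when the greedy process stalls at $V_k$, you select a single top-dimensional irreducible component $C$ of $V_k$ through $P_0$ and assert that, since intersecting with $X+P_0-P$ does not drop $\dim_{P_0}V_k$, necessarily $C\subseteq X+P_0-P$. This does not follow: $V_k$ may have several top-dimensional components $C_1,\ldots,C_s$ through $P_0$, and non-drop of the local dimension only guarantees that \emph{some} $C_j$ survives inside $X+P_0-P$, with $j$ possibly depending on $P$. The paper keeps all of $C_1,\ldots,C_s$ and concludes that each $Q\in\Xi$ lies in $\{Q':C_j+Q'-P_0\subseteq X\}$ for some $j$, so $\Xi$ is contained in the union over $j$ of these sets. (A related slip: you invoke $P_0\in X^\circ$ to rule out $\dim\Stab(X)\geq 1$, but the hypothesis only places $P_0$ in $X(\bar\QQ)$; the correct observation is that a positive-dimensional stabilizer forces $X^\circ=\emptyset$, hence $\Xi=\emptyset$.)

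Second, your degree bound does not go through. You claim the chain $W_0\supsetneq W_1\supsetneq\cdots$ terminates at $X'$ in at most $r+1$ steps because ``the dimension drops from $r$'', but strict containment does not force a dimension drop, so there is no a priori bound on the length of this chain in terms of $r$ alone. The paper avoids this by not attempting to reach $\{Q':C_j+Q'-P_0\subseteq X\}$ exactly: for each $j$ it picks one $c_j\in C_j$ with $c_j-P_0\notin\Stab(X)$ (possible since $\Stab(X)$ is finite and $\dim C_j\geq 1$) and sets $X_j:=X\cap(X+P_0-c_j)$, which is already proper in $X$ with $\deg_L X_j\leq d^2$. Since $s\leq\deg_L V_k\leq d^{k+1}\leq d^r$, the union $X':=\bigcup_j X_j$ has $\deg_L X'\leq d^{r+2}$.
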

\begin{proof}
	If the dimension of the stabilizer $\Stab(X)$ of $X$ in $A$ is not $0$, then every point of $X$ is in a positive dimensional coset of $X$, whence $X^\circ=\emptyset$. So we assume $\Stab(X)$ is finite.
	
	Notice that the fiber of $D_X$ over $(P_1-P_0,\ldots,P_r-P_0)$ is
	\[
	\{(P_0+a,\ldots,P_r+a)\in X^{r+1}(\bar\QQ):a\in A(\bar\QQ)\},	\]
	which is isomorphic to $\bigcap_{i=0}^r (X-P_i)$, where $(P_0,\ldots,P_r)$ corresponds to $0$ under the isomorphism. Thus the condition that $(P_0,\ldots,P_r)$ is isolated in the fiber is equivalent to that the dimension of $\bigcap_{i=0}^r (X-P_i)$ at the origin is $0$. 
	
	Assume there exists $P_0\in X(\bar\QQ)$ such that $\dim_0 \bigcap_{i=0}^r (X-P_i)\neq 0$ for any choice of $P_1,\ldots,P_r\in \Xi$, where $\dim_0 Y$ stands for the dimension of $Y$ at $0$. Then we can use the greedy algorithm to pick out step by step $P_1,P_2,...,P_{r_0}$ for some $r_0<r$ with the following property: 
	\[ \dim_0 (X-P_0)> \dim_0 \bigcap_{i=0}^{1}(X-P_i)>\ldots > \dim_0\bigcap_{i=0}^{r_0}(X-P_i),\]
	and we cannot reduce the dimension at $0$ in one step any more. In other words, if we let $C_1,\ldots, C_s$ be the top-dimensional irreducible components passing through $0$ of $\bigcap_{i=0}^{r_0}(X-P_i)$, then for any $Q\in \Xi$, the translate $X-Q$ must contain some $C_i$, for some $i=i(Q)$. On the other hand, $C_i\subseteq X-Q$ if and only if $Q\in \bigcap_{a\in C_i(\bar\QQ)}(X-a).$ So we have
	\[
	\Xi\subseteq \bigcup_{i=1}^s\bigcap_{a\in C_{i}(\bar\QQ)}(X-a).
	\]
	Claim: $\bigcap_{a\in C_{i}(\bar\QQ)}(X-a)$ is contained in a proper subvariety $X_i$ of $X$ of degree at most $d^2$. Indeed, since $\Stab(X)$ is finite, there is some $a_i\in C_i(\bar\QQ)$ such that $X-a_i\neq X$. So simply take $X_i:=X\cap (X-a_i)$. On the other hand, $s\leq d^r$ since $\deg\bigcap_{i=0}^{r_0}(X-P_i)\leq d^{r}$. Let $X'$ be the union of $X_i$'s. Then $\deg X'\leq d^{r+2}$ and $\Xi\subseteq X'(\bar\QQ)$.
	\end{proof}

\subsection{Large points}
In the proof of Theroem \ref{uniform_poonen}, we will use induction on the dimension of $X$. To make things more clear, we extract the steps from the proof and make them propositions.
\begin{prop}\label{prop_large_points}
Assume Theorem \ref{uniform_poonen} holds for $\dim X\leq r-1$. In the case of $\dim X=r$, there exist positive constants $\epsilon_1=\epsilon_1(r,g,d,l)$ and $c_3=c_3(r,g,d,l)$ with the following property. For any finitely generated subgroup $\Gamma\subseteq A(\bar\QQ)$ of rank $\rho$, we have
	\[
	\#\left\{P\in X^\circ(\bar\QQ)\cap \Gamma_{\epsilon_1}': \hat h(P)> 4c_2\max\{1,h(X),\hfal(A)\}\right\}\leq c_3^{1+\rho}
	\]where $c_2=c_2(r,g,d,l)$ is taken from Theorem \ref{theorem_inequalities}.
\end{prop}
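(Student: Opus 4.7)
The plan is to adapt R\'emond's Vojta-Mumford dichotomy from \cite{remond-decompte, remond-inegalite} so that the extra $\epsilon_1$-neighborhood acts only as a small perturbation. For each point $P \in \Gamma'_{\epsilon_1} \cap X^\circ$ in the large-height regime, fix a decomposition $P = \gamma + z$ with $\gamma \in \Gamma'$ and $|z|^2 < \epsilon_1$. Setting $M := \max\{1, h(X), \hfal(A)\}$, the large-height hypothesis $|P|^2 > 4c_2 M$ together with Lemma \ref{lemma_anglebounds} shows that if $\epsilon_1$ is chosen small enough (depending only on $r,g,d,l$ through $c_1$ and $c_2$), then $\gamma$ and $P$ differ by a relatively small error in both norm and direction. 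This is the quantitative input that allows R\'emond's arguments to go through with the $\gamma$-part playing the structural role.

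First, I would use Lemma \ref{lemma_cover_by_cones} applied to the real vector space $\Gamma \otimes_\ZZ \RR$ of dimension $\rho$, with a constant $c = c(r,g,d,l) > c_1$ chosen large enough to leave room for the perturbation. This yields at most $(1+\sqrt{8c})^\rho$ cones in which any two vectors satisfy $\langle v, w\rangle \geq (1-1/c)|v||w|$, and each large point $P$ is assigned to the cone containing its $\gamma$. Combining the cone condition on the $\gamma_i$'s with Lemma \ref{lemma_anglebounds} applied to the perturbations $z_i$, one obtains the slightly weaker angle condition $\langle P_i, P_j \rangle \geq (1-1/c_1)|P_i||P_j|$ required by Theorem \ref{theorem_inequalities}.

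Within each cone, order points by height. If there are $P_0, \dots, P_r$ with geometrically growing heights $|P_{i+1}| \geq c_1 |P_i|$, Vojta's inequality (Theorem \ref{theorem_inequalities}(1)) would give $|P_0|^2 \leq c_2 M$, contradicting the large-height assumption. Consequently the points in a cone occupy at most $r$ height bands where heights are pairwise within a factor of $c_1$. I would then cover each band by $O(1)^\rho$ smaller subregions using Lemma \ref{lemma_cover_by_balls}, arranged so that the Mumford norm condition $||P_0|-|P_i|| \leq (1/c_1)|P_0|$ persists after the $\epsilon_1$-perturbation. Within one such subregion $\Xi$, for any fixed $P_0 \in \Xi$, either some $(P_0, P_1, \dots, P_r)$ with $P_i \in \Xi$ is isolated in a fiber of $D_X$ (whence Mumford's inequality (Theorem \ref{theorem_inequalities}(2)) again contradicts the height lower bound) or Proposition \ref{prop_degree_bound_for_isolation} confines $\Xi$ to a proper closed subvariety $X' \subsetneq X$ of degree less than $d^{r+2}$, to which the inductive hypothesis applies (using Remark \ref{remark1}(3) to absorb reducibility of $X'$ via its degree). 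Multiplying the cone, band, and subregion counts collapses everything into a single $c_3^{1+\rho}$.

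The main obstacle is the coordinated choice of $\epsilon_1, c, c_1$: the cone tolerance $1/c$ must be strictly tighter than the $1/c_1$ needed by Theorem \ref{theorem_inequalities}, by a margin big enough that an error of norm $\sqrt{\epsilon_1}$ cannot destroy either the angle condition (for Vojta) or the norm condition (for Mumford). A secondary point is that the only $\rho$-dependence of the final count comes from the covering factors $(1+\sqrt{8c})^\rho$ and $(1+2c/c')^\rho$ of Lemmas \ref{lemma_cover_by_cones} and \ref{lemma_cover_by_balls}; the inductive bound applied to $X' \subsetneq X$ is of the form $(\text{const})^{1+\rho}$ with constant depending on $r-1, g$ and a degree bounded in terms of $d$, so it fits into the target shape $c_3^{1+\rho}$.
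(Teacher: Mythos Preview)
Your sketch is correct and uses the same ingredients as the paper (cone covering of $\Gamma\otimes\RR$, Lemma \ref{lemma_anglebounds} to absorb the $\epsilon_1$-perturbation into the angle condition, Theorem \ref{theorem_inequalities}, Proposition \ref{prop_degree_bound_for_isolation}, and the inductive hypothesis applied to a degree-$d^{r+2}$ subvariety). The difference is purely in the \emph{order} in which Vojta and Mumford are invoked.

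The paper works inside a single cone as follows: order the large points by height, and for each window $\{P_j,\dots,P_{j+N}\}$ with $N=c(r-1,g,d^{r+2})^{1+\rho}$, the inductive hypothesis forbids this window from lying in any $X'\subsetneq X$ of degree $\le d^{r+2}$, so Proposition \ref{prop_degree_bound_for_isolation} produces an isolated tuple $(P_j,Q_1,\dots,Q_r)$ with $Q_i$ in the window; then Mumford's inequality, used in the \emph{contrapositive}, forces $|P_{j+N}|\ge |Q_i|>(1+1/c_1)|P_j|$. Iterating gives $|P_{j+NM}|\ge c_1|P_j|$ for a suitable $M=M(c_1)$, and Vojta's inequality caps the total length at $rNM$. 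Thus no separate ``band/subregion'' covering is needed.

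Your route reverses this: Vojta first limits a cone to at most $r$ height bands, and then you refine each band into subregions where the Mumford norm condition holds and apply Mumford contrapositively together with Proposition \ref{prop_degree_bound_for_isolation} and the inductive hypothesis. This is perfectly valid; one small remark is that the refinement you need is really a partition into $O_{c_1}(1)$ thinner annuli (a height condition), so Lemma \ref{lemma_cover_by_balls} is not strictly necessary there---though covering by balls also works and still collapses into $c_3^{1+\rho}$. The paper's ordering is closer to R\'emond's original argument and avoids that extra covering layer, but both arrive at a bound of the shape $(\text{const})^\rho\cdot r\cdot c(r-1,g,d^{r+2})^{1+\rho}\cdot O_{c_1}(1)\le c_3^{1+\rho}$.
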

\begin{proof}
	Consider the $\rho$-dimensional real vector space $\Gamma\otimes \RR=\Gamma'\otimes \RR$ embedded in $A(\bar\QQ)\otimes \RR$, equipped with the inner product induced by the N\'eron-Tate height. 
	
	Take $c_1=c_1(r,g,d,l)$ from Theorem \ref{theorem_inequalities}. Let $\theta_1:=\frac{1}{2}[\arccos(1-\frac{1}{c_1})-\arccos(1-\frac{1}{2c_1})]$. By Lemma \ref{lemma_anglebounds} with $\theta=\theta_1$ and $c=\sqrt{c_2}$, there exists 
	\[
	\epsilon_1=\left(\delta(\theta_1)\cdot\sqrt{c_2}\right)^2=\epsilon_1(\theta_1,c_2)=\epsilon_1(c_1,c_2)=\epsilon_1(r,g,d,l),
	\]such that for any 	$v_1,v_2\in A(\bar\QQ)\otimes \RR$ with $|v_1|\geq \sqrt{c_2}$ and $|v_2-v_1|<\sqrt{\epsilon_1}$, the angle between $v_1,v_2$ is at most $\theta_1$. We need to decrease $\epsilon_1$ further soon.
	
	By Lemma \ref{lemma_cover_by_cones}, the $\rho$-dimensional vector space can be covered by at most $(1+4\sqrt{c_1})^\rho$ cones on which $\left\langle w_1,w_2 \right\rangle\geq (1-\frac{1}{2c_1})|w_1||w_2|$. Suppose $D$ is one such cone. Let $D_{\epsilon_1}$ be the $\epsilon_1$-neighborhood of $D$ in $A(\bar\QQ)\otimes \RR$ and let $D_{\epsilon_1}^+$ be the part of large points $v\in D_{\epsilon_1}$ with
	\[
	|v|^2> 4c_2\max\{1,h(X),\hfal(A)\}.
	\]
	Then for $v_1,v_2\in D_{\epsilon_1}^+$, we \emph{claim} that $\left\langle v_1,v_2\right\rangle\geq (1-\frac{1}{c_1})|v_1||v_2|$. Indeed, by definition, there is $w_1,w_2\in D$ with $|v_i-w_i|<\sqrt{\epsilon_1}$ for $i=1,2$. But
	\[
	|w_i|^2\geq (|v_i|-|v_i-w_i|)^2>(|v_i|-\sqrt{c_2})^2>c_2\max\{1,h(X),\hfal(A)\}.
	\]
	In particular $|w_i|\geq \sqrt{c_2}$. So the angle between $v_i,w_i$ is at most $\theta_1$ by the last paragraph, for $i=1,2$. But the angle between $w_1,w_2$ is at most $\arccos(1-\frac{1}{2c_1})$. Therefore by the triangle inequality, the angle between $v_1,v_2$ is at most
	$2\theta_1+\arccos(1-\frac{1}{2c_1})=\arccos(1-\frac{1}{c_1})$, hence the claim.
	
	It then suffices to bound the number of points in $X^\circ\cap D_{\epsilon_1}^+$. This is where we invoke the inequalities. In order to use Mumford's inequality, we also need to use the inductive hypothesis for lower dimensions. 
	
	Specifically, let us take any sequence of distinct points $|P_1|\leq |P_2|\leq\ldots$ in $X^\circ\cap D_{\epsilon_1}^+$ ordered by their heights. Note that we do not even know the finiteness of the sequence yet and in fact we may need to shrink $\epsilon_1$ to ensure that. Replace $\epsilon_1$ by $\min\{\epsilon_1,\epsilon(r-1,g,d^{r+2})\}$ and let $N:=c(r-1,g,d^{r+2})^{\rho+1}$, where $\epsilon,c$ are the functions in Theorem \ref{uniform_poonen} for lower dimensions. 
		
	Claim: any subset $\Xi$ of $X^\circ(\bar\QQ)\cap D_{\epsilon_1}^+$ with cardinality $\geq N+1$ is not contained in any Zariski closed subset $X'\subsetneq X$ with $\deg_L X'\leq d^{r+2}$. Indeed, if $X'\subsetneq X$ is a Zariski closed subset containing $\Xi$, then $\Xi\subseteq X'\cap X^\circ\subseteq (X')^\circ$. So  $(X')^\circ\cap \Gamma_{\epsilon_1}'$ contains $N+1$ points, which implies $\deg_L X'>d^{r+2}$.  
	
	So by Proposition \ref{prop_degree_bound_for_isolation} with
	$\Xi:=\{P_j,P_{j+1},...,P_{j+N}\}$, there is $Q_1,...,Q_r\in \Xi$ such that $(P_j,Q_1,\ldots,Q_r)$ is isolated in the fiber of $D_X:X^{r+1}\rightarrow A^r$, whence Mumford's inequality applies and we get
	\[
	|P_{j+N}|\geq |Q_i|> (1+\frac{1}{c_1})|P_j| \text{ for any }j.
	\]
	
	Take $M:=M(c_1)$ such that $(1+\frac{1}{c_1})^{M}\geq c_1$. Then
	\[
	|P_{j+NM}|>(1+\frac{1}{c_1})|P_{j+{N(M-1)}}|>\ldots>(1+\frac{1}{c_1})^M|P_j|\geq c_1|P_j|.
	\]
	Then we must have $\# X^\circ\cap D_{\epsilon_1}^+\leq rNM$, since otherwise the sequence
	\[
	P_1,P_{1+NM},P_{1+2NM},\ldots, P_{1+rNM}
	\]would contradict Vojta's inequality.
	
	Overall, we see that 
	\[
	\#\left\{P\in X^\circ(\bar\QQ)\cap \Gamma_{\epsilon_1}': \hat h(P)> 4c_2\max\{1,h(X),\hfal(A)\}\right\}\leq (1+4\sqrt{c_1})^\rho\cdot rNM.
	\]The result follows by noticing that
	\[
	(1+4\sqrt{c_1})^\rho\cdot rNM=(1+4\sqrt{c_1})^\rho\cdot r\cdot c(r-1,g,d^{2g})^{\rho+1}\cdot M(c_1)\leq c_3^{1+\rho}
	\]for some $c_3=c_3(r,g,d,l)$.

\end{proof}

In particular, we get the following qualitative result as a corollary by applying the uniform Bogomolov conjecture. This finiteness result will be used later in Proposition \ref{prop_hx_removed}. 
\begin{prop}\label{prop_finiteness}
	Assume Theorem \ref{uniform_poonen} holds for $\dim X\leq r-1$. In the case of $\dim X=r$, there exists a constant $\epsilon_0=\epsilon_0(r,g,d,l)>0$ such that for any finitely generated subgroup $\Gamma\leq A(\bar\QQ)$, the intersection  $X^\circ(\bar\QQ)\cap \Gamma'_{\epsilon_0}$ is finite.
\end{prop}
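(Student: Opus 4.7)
The plan is to split $X^\circ\cap\Gamma'_{\epsilon_0}$ by the height threshold $B:=4c_2\max\{1,h(X),\hfal(A)\}$ from Proposition~\ref{prop_large_points}. Take $\epsilon_0:=\min\{\epsilon_1(r,g,d,l),\,\epsilon(r-1,g,d^{r+2})\}$, where $\epsilon_1$ is the constant from Proposition~\ref{prop_large_points} and $\epsilon(r-1,g,d^{r+2})$ is the constant from Theorem~\ref{uniform_poonen} in dimension $r-1$ applied to subvarieties of degree up to $d^{r+2}$, available by the induction hypothesis. Proposition~\ref{prop_large_points} immediately bounds the large-height part $\{P\in X^\circ\cap\Gamma'_{\epsilon_0}:\hat h(P)>B\}$ by $c_3^{1+\rho}$, which is finite.

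It then suffices to show that the bounded-height part $\Xi_S:=\{P\in X^\circ\cap\Gamma'_{\epsilon_0}:\hat h(P)\leq B\}$ is finite, which I would do by contradiction. Suppose $\Xi_S$ is infinite and invoke Proposition~\ref{prop_degree_bound_for_isolation}. If $\Xi_S$ is contained in some Zariski closed $X'\subsetneq X$ with $\deg_L X'<d^{r+2}$, then since positive-dimensional cosets in $X'$ are also cosets in $X$ we get $\Sp(X')\subseteq\Sp(X)$ and hence $\Xi_S\subseteq X'^\circ\cap\Gamma'_{\epsilon_0}$; as $\dim X'\leq r-1$, the induction hypothesis gives $|\Xi_S|\leq c(r-1,g,d^{r+2})^{1+\rho}<\infty$, a contradiction.

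Otherwise $\Xi_S$ has the isolation property of Proposition~\ref{prop_degree_bound_for_isolation}, and I would rerun the cone/cluster decomposition from the proof of Proposition~\ref{prop_large_points}: partition $\Gamma\otimes\RR$ into $(1+4\sqrt{c_1})^\rho$ cones satisfying the Mumford angle condition and cluster heights geometrically with ratio $1+1/c_1$. Each cluster-cone with heights exceeding $\sqrt{c_2\max\{1,h(X),\hfal(A)\}}$ then contains at most $c(r-1,g,d^{r+2})^{1+\rho}$ elements by Mumford's inequality combined with the isolation property, as in Proposition~\ref{prop_large_points}; for cluster-cones at smaller heights, the uniform Bogomolov from \cite{GGK} bounds the number of small-height points in $X^\circ$ uniformly in terms of $r,g,d$. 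Summing over finitely many cluster-cones then yields a finite bound on $\Xi_S$, again contradicting infinitude. The main technical obstacle lies in this isolation case: bridging the middle range of heights where neither Mumford's inequality nor the uniform Bogomolov bound applies directly, which I would address by carefully combining the two bounds across the cone/cluster partition in the spirit of Proposition~\ref{prop_large_points}, possibly shrinking $\epsilon_0$ further as needed.
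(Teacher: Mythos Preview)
Your large-height part via Proposition~\ref{prop_large_points} is fine and agrees with the paper. The gap is in your treatment of the bounded-height set $\Xi_S=\{P\in X^\circ\cap\Gamma'_{\epsilon_0}:\hat h(P)\le B\}$. Mumford's inequality cannot help here: its \emph{conclusion} is precisely $|P_0|^2\le c_2\max\{1,h(X),\hfal(A)\}$, so for points already satisfying $\hat h(P)\le 4c_2\max\{1,h(X),\hfal(A)\}$ it yields no contradiction and hence no growth estimate. Your Case~B therefore has no engine. Moreover, your dichotomy is too coarse: knowing that the whole of $\Xi_S$ is not trapped in a small-degree $X'$ does not tell you that each short chunk of $N+1$ height-ordered points is untrapped, which is what the Mumford step in Proposition~\ref{prop_large_points} actually uses. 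And the uniform Bogomolov bound, as stated, only controls points within a ball of fixed radius $\sqrt\epsilon$ around a single center; it does not by itself bound all of $\Xi_S$, whose diameter is of order $\sqrt{\max\{1,h(X),\hfal(A)\}}$. The ``middle range'' you flag as a technical obstacle is in fact the entire set $\Xi_S$.

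The paper's argument avoids all of this. It takes $\epsilon_0=\tfrac{1}{16}\min\{\epsilon,\epsilon_1\}$, where $\epsilon=\epsilon(g,d)$ is the uniform Bogomolov radius from \cite{GGK}, and then simply covers the ball of radius $\sqrt{B}$ in $\Gamma\otimes\RR$ by \emph{finitely many} balls of radius $\sqrt{\epsilon_0}$. Any two points $P,Q$ lying in the $\epsilon_0$-neighborhood of the same small ball satisfy $|P-Q|\le 4\sqrt{\epsilon_0}$, hence $\hat h(P-Q)\le\epsilon$, so each such neighborhood contains at most $c=c(g,d)$ points of $X^\circ$. Finitely many regions times a finite bound per region gives finiteness. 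No Mumford, no Case~A/B split, no induction on $\dim X$ is needed for the small-height part; the only external input is the uniform Bogomolov theorem, which you already invoke but in the wrong place.
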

\begin{proof}
	By the uniform Bogomolov conjecture \cite[Theorem 1.3]{GGK}, there is $\epsilon=\epsilon(g,d)>0, c=c(g,d)>0$ such that for any $Q\in A(\bar\QQ)$,
	\[
	\#\{P\in X^\circ(\bar\QQ):\hat h(P-Q)\leq \epsilon\}<c.
	\]
	Let $\epsilon_0:=\frac{1}{16}\min\{\epsilon, \epsilon_1\}$. By Proposition \ref{prop_large_points}, we just need to show that the set
	\[
	\left\{P\in X^\circ(\bar\QQ)\cap \Gamma_{\epsilon_0}': \hat h(P)\leq 4c_2\max\{1,h(X),\hfal(A)\}\right\}
	\]is finite. For this, first cover the ball $B$ in $\Gamma\otimes \RR$ of radius 
	\[
	\sqrt{4c_2\max\{1,h(X),\hfal(A)\}}
	\]by finitely many balls of radius $\sqrt{\epsilon_0}$. Then the $\epsilon_0$-neighborhood $B_{\epsilon_0}$ is covered by the $\epsilon_0$-neighborhoods of the finitely many small balls. For any two points $P,Q$ in the $\epsilon_0$-neighborhood of a same small ball, we have
	\[
	|P-Q|\leq 2\sqrt{\epsilon_0}+2\sqrt{\epsilon_0}=4\sqrt{\epsilon_0}.
	\]So $\hat h(P-Q)\leq 16\epsilon_0\leq \epsilon$. Thus there are at most $c$ points in such a neighborhood. To conclude, we have finitely many regions and in each region we have finitely many points. So we get finiteness.
\end{proof}

\subsection{Removing \texorpdfstring{$h(X)$}{h(X)}}
\begin{lemma}\label{lemma_removing_hx}
	Assume $\Xi\subseteq X(\bar\QQ)$ is a finite set with the property that any equidimensional subvariety $X'\subseteq X$ of dimension $r-1$ containing $\Xi$ satisfies $\deg_L X'>ld^2/g!$. Then 
	\[
	h(X)\leq d(l/g!+1)^{r+1}\cdot \left( \max_{P\in \Xi} \hat h(P) +3\log(l/g!)  \right).
	\]
\end{lemma}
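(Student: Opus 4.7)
The plan is to bound $h(X)$ via an arithmetic Chow form argument. Using the projectively normal embedding $A \hookrightarrow \PP^n$ induced by $L$ (with $n+1 = l/g!$), the height $h(X)$ agrees with $h(R_X)$ up to a standard normalization, where $R_X$ is the Chow form of $X$: a multihomogeneous polynomial of multi-degree $(d,\dots,d)$ in $r+1$ groups of $n+1$ variables. The strategy is to evaluate $R_X$ at hyperplanes constructed through points of $\Xi$ and transfer bounds on the evaluation back to bounds on $h(R_X)$.

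The first step is to construct, iteratively, hyperplanes $H_1,\dots,H_r \subseteq \PP^n$ such that (i) the proper intersection $Z := X \cdot H_1 \cdots H_r$ is a zero-dimensional cycle of length $d$ supported in $\Xi$, and (ii) each $H_i$ has Weil height at most $\max_{P \in \Xi}\hat h(P) + O(\log(l/g!))$. For (ii), interpolation through finitely many specified points is linear algebra and by Cramer's rule the resulting coefficients are determinants whose Weil heights are bounded by the heights of the entries plus a $\log$ of the matrix size, i.e.\ $O(\log(l/g!))$. For (i), the hypothesis enters as follows: any hypersurface of degree $e \le (l/g!)d$ that contains $\Xi$ and does not contain $X$ would cut out an equidimensional $X' \subseteq X$ of dimension $r-1$ and degree $\le ed \le ld^2/g!$ containing $\Xi$, contradicting the hypothesis. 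Consequently, whenever we choose a hyperplane $H_i$ (of degree $1$) through a controlled subset of the current ``$\Xi$-in-the-intermediate-variety'', it must cut properly, and one can arrange (by choosing subsets of size $d$ appropriately, using that $|\Xi|$ is large enough) that the final $Z$ is supported on $\Xi$.

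The second step invokes the defining property of the Chow form: for a generic additional hyperplane $H_0$,
\[
R_X(H_0,H_1,\dots,H_r) \;=\; c \cdot \prod_{P \in \mathrm{supp}(Z)} H_0(P)^{m_P}, \qquad \sum_P m_P = d,
\]
with $c$ an explicit scalar. Taking Weil heights and using $h(H_0(P)) \le h(H_0) + \hat h(P) + O(\log(l/g!))$ gives a bound on $h(R_X(H_0,\dots,H_r))$ that is linear in $h(H_0)$ with leading coefficient $d$. Letting $H_0$ run over the standard basis of linear forms and using the multi-degree $(d,\dots,d)$ multihomogeneity of $R_X$, I compare the $(n+1)^{r+1}$ monomials of $R_X$ against the evaluations to recover $h(R_X)$ itself. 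The factor $(l/g!+1)^{r+1}$ arises as the count of such monomial-to-evaluation comparisons (one extra for the leading hyperplane $H_0$ in each group), and the $3\log(l/g!)$ correction collects the comparison constants between Weil height on $\PP^n$, Chow form height, and $\hat h$.

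The main obstacle is the careful bookkeeping in the first step: one has to verify that the ``genericity'' hypothesis on $\Xi$ propagates through the iterative cutting, i.e.\ after cutting by $H_1,\dots,H_{i-1}$ the induced configuration of $\Xi$-points inside the intermediate $(r-i+1)$-dimensional variety still evades the subvarieties that would prevent the next choice of $H_i$. The quantitative threshold $\deg_L X' > ld^2/g!$ in the hypothesis is calibrated precisely so that this inductive step works: the residual cycles have degree much smaller than the threshold, leaving room for hyperplanes of degree $1$ at each stage. Once this is verified, Steps 2 and the final height comparison are routine tracking of normalizations in arithmetic intersection theory.
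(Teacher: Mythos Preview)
The paper does not give its own proof here; it simply quotes \cite[Lemme 3.1]{remond-decompte} with $n=l/g!-1$. So the comparison is with R\'emond's original argument, and I evaluate your sketch on its own merits.

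Your Chow-form framework is the right ambient setting, but Step~1 contains a genuine gap. You claim one can choose hyperplanes $H_1,\dots,H_r$ so that the zero-cycle $Z=X\cdot H_1\cdots H_r$ is \emph{supported in $\Xi$}. Nothing in the hypothesis furnishes this: for $\operatorname{supp}(Z)\subseteq\Xi$ you would need $\Xi$ to contain a complete linear section of $X$, an extremely special configuration that the hypothesis does not provide. Your justification also misreads the hypothesis. You correctly observe that no hypersurface of degree $\le (l/g!)d$ can contain all of $\Xi$ without containing $X$; but you then pass hyperplanes through a \emph{proper subset} of $\Xi$, to which that statement simply does not apply. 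The inference ``it must cut properly'' is a non sequitur---indeed, when the hypothesis does apply (hypersurface through all of $\Xi$), it says the hypersurface must \emph{contain} $X$, the opposite of cutting properly. Finally, interpolating through a handful of $\Xi$-points controls only those points of $Z$; the remaining points of the degree-$d$ zero-cycle are unconstrained and there is no mechanism forcing them into $\Xi$. Since your Step~2 bounds $h(H_0(P))$ via $\hat h(P)$ only for $P\in\Xi$, the whole chain breaks here.

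The way the hypothesis is actually exploited in R\'emond's argument runs in the opposite direction. One shows a dichotomy: either $h(X)$ already satisfies the stated bound, or an interpolation/Siegel-type construction produces a hypersurface of degree at most $(n+1)d$ vanishing on every point of $\Xi$ but not on $X$; intersecting with $X$ then yields an equidimensional $X'\subsetneq X$ of dimension $r-1$ and degree at most $(n+1)d\cdot d=ld^2/g!$ containing $\Xi$, contradicting the hypothesis. Equivalently, the hypothesis forces the evaluation-at-$\Xi$ map on forms of degree $(n+1)d$ modulo $I(X)$ to be injective, and linear algebra with height control then transfers $\max_{P\in\Xi}\hat h(P)$ to a bound on $h(X)$ with exactly the factor $d(n+1)^{r+1}$. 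Your Step~2 identity for the Chow form and the bookkeeping in Step~3 are of the right shape; it is the construction in Step~1 that does not go through as written.
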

\begin{proof}
	This is \cite[Lemme 3.1]{remond-decompte} with $n=l/g!-1$.
\end{proof}

Using this lemma, we can remove $h(X)$ from Proposition \ref{prop_large_points}. The idea is to consider all translates of $X$ and find a relatively small height.
\begin{prop}\label{prop_hx_removed}
	Assume Theorem \ref{uniform_poonen} holds for $\dim X\leq r-1$. In the case of $\dim X=r$, there exist positive constants $\epsilon_2=\epsilon_2(r,g,d,l)$, $c_4=c_4(r,g,d,l)$ and $c_5=c_5(r,g,d,l)$ such that for any finitely generated subgroup $\Gamma\subseteq A(\bar\QQ)$ of rank $\rho$, either
	\[
	\# X^\circ(\bar\QQ)\cap \Gamma'_{\epsilon_2}\leq c_4^{1+\rho},
	\]or there exists $Q_0\in X^\circ(\bar\QQ)\cap \Gamma'_{\epsilon_2}$ such that 
	\[
	\#\{P\in X^\circ(\bar\QQ)\cap\Gamma'_{\epsilon_2}:\hat h(P-Q_0)> c_5\max \{1,\hfal(A)\}\}\leq c_3^{2+\rho}
	\]where $c_3=c_3(r,g,d,l)$ is taken from Proposition \ref{prop_large_points}.
\end{prop}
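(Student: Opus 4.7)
The plan is to find $Q_0 \in X^\circ \cap \Gamma'_{\epsilon_2}$ such that the translate $X - Q_0$ has height $h(X - Q_0) \leq C \max\{1, \hfal(A)\}$ for a constant $C = C(r,g,d,l)$, then apply Proposition \ref{prop_large_points} to $X - Q_0$ to bound the number of $P$ with $\hat h(P - Q_0) > 4c_2 \max\{1, h(X - Q_0), \hfal(A)\}$ by $c_3^{1+\rho}$. The bound on $h(X - Q_0)$ will come from Lemma \ref{lemma_removing_hx}, which calls for a subset $\Xi \subseteq X^\circ \cap \Gamma'_{\epsilon_2}$ with $|\Xi| > N := c(r-1, g, ld^2/g!)^{1+\rho}$: by the inductive hypothesis of Theorem \ref{uniform_poonen}, any such $\Xi$ avoids all equidimensional $(r-1)$-dimensional subvarieties $X' \subseteq X$ with $\deg_L X' \leq ld^2/g!$. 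The main point is to find such a $\Xi$ that is also well-clustered near some $Q_0$, so that $\max_{P \in \Xi} \hat h(P - Q_0)$ is controlled by $\max\{1, \hfal(A)\}$.

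\textbf{Main construction.} First choose $\epsilon_2 \leq \min\{\epsilon_0(r,g,d,l),\, \epsilon_1(r,g,d,l)/4,\, \epsilon(r-1, g, ld^2/g!)\}$, which ensures finiteness of $X^\circ \cap \Gamma'_{\epsilon_2}$ via Proposition \ref{prop_finiteness}; the factor $1/4$ accounts for the bound $\hat h(z_1 - z_2) \leq 4\epsilon_2$ when $\hat h(z_i) < \epsilon_2$, so that Proposition \ref{prop_large_points} applies to translates via the $4\epsilon_2$-neighborhood. If $|X^\circ \cap \Gamma'_{\epsilon_2}| \leq c_4^{1+\rho}$ we are done. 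Otherwise, Proposition \ref{prop_large_points} places all but $c_3^{1+\rho}$ points in a Euclidean ball $B(0, R) \subset \Gamma \otimes \RR$ with $R^2 \asymp \max\{1, h(X), \hfal(A)\}$. Cover $B(0, R)$ by $\leq (1+2M)^\rho$ sub-balls of radius $R/M$ via Lemma \ref{lemma_cover_by_balls}, for a constant $M = M(r,g,d,l)$ to be fixed. Taking $c_4$ large enough that $c_4^{1+\rho} > c_3^{1+\rho} + (1+2M)^\rho N$, by pigeonhole some sub-ball contains a subset $\Xi$ of more than $N$ points. Pick any $Q_0 \in \Xi$: since translation preserves degrees, $\Xi - Q_0$ avoids the low-degree $(r-1)$-dimensional subvarieties of $X - Q_0$, and $\max_{P \in \Xi} \hat h(P - Q_0) \leq (2R/M)^2$. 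Lemma \ref{lemma_removing_hx} then yields
\[ h(X - Q_0) \leq d(l/g!+1)^{r+1}\bigl(4R^2/M^2 + 3\log(l/g!)\bigr). \]

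\textbf{Main obstacle.} Choosing $M$ large so that the coefficient of $\max\{1, h(X), \hfal(A)\}$ in the bound above is at most $1/2$ gives $h(X - Q_0) \leq \tfrac{1}{2}\max\{1, h(X), \hfal(A)\} + C_0$, which is only a constant-factor improvement on $h(X)$. When $h(X) \gg \hfal(A)$, the threshold $4c_2 \max\{1, h(X - Q_0), \hfal(A)\}$ in Proposition \ref{prop_large_points} applied to $X - Q_0$ is still of order $h(X)$, not $\hfal(A)$, and the naive one-step argument does not give a bound independent of $h(X)$. Closing this gap will be the main technical difficulty: the natural remedy is to iterate the clustering inside $\Xi$, refining to sub-clusters at scales $R/M, R/M^2, \ldots$ and accumulating the translations into a single point $Q_0 \in X^\circ \cap \Gamma'_{\epsilon_2}$; each iteration reduces $|\Xi|$ by a factor $(1+2M)^\rho$ and $R^2$ by $M^2$. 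The delicate bookkeeping is that the number of iterations $K \sim \log_M\bigl(R/\sqrt{\max\{1,\hfal(A)\}}\bigr)$ grows with $h(X)/\hfal(A)$, so one must enlarge $c_4$ and shrink $\epsilon_2$ to keep the cumulative noise under $\epsilon_2$ and to maintain $|\Xi| > N$ throughout the iteration; the expanded constant $c_3^{2+\rho}$ (versus $c_3^{1+\rho}$) in the conclusion reflects the slack allowed by this iterated translation-clustering argument.
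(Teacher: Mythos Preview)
Your overall strategy---translate $X$ by some $Q_0$ so that $h(X-Q_0)$ is controlled by $\hfal(A)$ via Lemma~\ref{lemma_removing_hx}, then apply Proposition~\ref{prop_large_points} to $X-Q_0$---matches the paper's, and you correctly isolate the obstacle: a single pigeonhole step only yields $h(X-Q_0)\leq \tfrac12\max\{1,h(X),\hfal(A)\}+C_0$, which is useless when $h(X)\gg\hfal(A)$.

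Your proposed fix, however, has a genuine gap. As you describe the iteration, each refinement keeps only a sub-cluster of $\Xi$ of size $\geq |\Xi|/(1+2M)^\rho$, and you still need $>N$ points at the last step to invoke Lemma~\ref{lemma_removing_hx}. After $K\asymp\log_M\bigl(h(X)/\hfal(A)\bigr)$ steps this forces the starting set to have more than $N(1+2M)^{\rho K}$ points, so the threshold $c_4^{1+\rho}$ would depend on $h(X)$, destroying uniformity. Your remark that one should ``enlarge $c_4$ and shrink $\epsilon_2$'' does not help, since these constants may depend only on $(r,g,d,l)$. (An iterative argument \emph{can} be made to work, but not by nesting inside $\Xi$: after finding each new center $Q_k$ one must re-apply Proposition~\ref{prop_large_points} to the \emph{entire} set $(X-Q_k)^\circ\cap\langle\Gamma,Q_k\rangle'_{\epsilon_2}$, which re-localizes all but $c_3^{2+\rho}$ points into the smaller ball; then the pigeonhole threshold is the same at every step and independent of $K$.)

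The paper avoids iteration altogether with a minimization trick. For each $Q\in X^\circ\cap\Gamma'_{\epsilon_2}$ set $M(Q):=\min\{M\geq 0:\#\{P:\hat h(P-Q)>M\}\leq c_3^{2+\rho}\}$; by Proposition~\ref{prop_finiteness} the minimum $M_0=M(Q_0)$ is attained. Now run your one-step pigeonhole argument \emph{inside the ball} $B(Q_0,\sqrt{M_0})$: if enough points are present, some sub-ball of radius $\sqrt{M_0}/(4\sqrt{N_1})$ contains $N$ points $P_1,\dots,P_N$, and your halving computation gives $M(P_1)\leq \tfrac12 M_0 + C\max\{1,\hfal(A)\}$. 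But $M_0\leq M(P_1)$ by minimality, whence $M_0\leq 2C\max\{1,\hfal(A)\}$ in one stroke. This self-referential inequality is the idea you are missing. Incidentally, the exponent $2+\rho$ on $c_3$ in the statement is not ``slack'' from iteration; it arises because Proposition~\ref{prop_large_points} is applied to the translate $X-Q$ with the enlarged group $\langle\Gamma,Q\rangle$ of rank $\leq\rho+1$.
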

\begin{proof}
	Write $N:=c(r-1,g,ld^2/g!)^{2+\rho}+1$ and \[\epsilon_2:=\min\{\epsilon_0(r,g,d,l),\epsilon_1(r,g,d,l),\epsilon(r-1,g,ld^2/g!)\}\]
	where $\epsilon_0,\epsilon_1$ are from Proposition \ref{prop_finiteness} and Proposition \ref{prop_large_points} respectively. 
		
	Take any $Q\in X(\bar\QQ)$. If $P_1,\ldots,P_N$ are distinct points of $X^\circ(\bar\QQ)\cap\Gamma_{\epsilon_2}'$, then $P_1-Q,\ldots, P_N-Q$ are distinct points of $(X^\circ(\bar\QQ)-Q)\cap \left\langle\Gamma, Q\right\rangle'_{\epsilon_2}$, where $\left\langle\Gamma, Q\right\rangle$ is the subgroup of $A(\bar\QQ)$ generated by $\Gamma$ and $Q$. By Theorem \ref{uniform_poonen}, the set $\Xi:=\{P_1-Q,\ldots,P_N-Q\}$ is not contained in any subvariety $X'\subsetneq X-Q$ with $\dim X'\leq r-1$ and $\deg_L X'\leq ld^2/g!$, simply because $(X')^\circ(\bar\QQ)\cap \left\langle\Gamma, Q\right\rangle'_{\epsilon_2}$ does not contain so many points. Thus Lemma \ref{lemma_removing_hx} applies to $\Xi$ and $X-Q$ and we have
	\[
	h(X-Q)\leq d(l/g!+1)^{r+1}\cdot \left( \max_{1\leq i\leq N} \hat h(P_i-Q) +3\log(l/g!)  \right).
	\]
	
	Applying Proposition \ref{prop_large_points} to $X-Q$ and $\left\langle\Gamma,Q\right\rangle'_{\epsilon_2}$ with the above height bound, we find the cardinality of 
	\[
	\left\{P-Q\in (X^\circ(\bar\QQ)-Q)\cap \langle\Gamma,Q\rangle'_{\epsilon_2}:\hat h(P-Q)> N_1\max_{1\leq i\leq N} \hat h(P_i-Q) +N_2\max\{1,\hfal(A)\}\right\}
	\]is at most $c_3^{2+\rho}$ for some $N_1=N_1(r,g,d,l)$ and $N_2=N_2(r,g,d,l)$, which in particular implies that
	\begin{equation}\label{eqn_remove_hx}
	\#\left\{P\in X^\circ(\bar\QQ)\cap \Gamma'_{\epsilon_2}:\hat h(P-Q)> N_1\max_{1\leq i\leq N} \hat h(P_i-Q) +N_2\max\{1,\hfal(A)\}\right\}\leq c_3^{2+\rho}
	\end{equation}since $\Gamma'_{\epsilon_2}-Q$ is contained in $\left\langle\Gamma, Q\right\rangle'_{\epsilon_2}$. 
	
	Now let us restrict the choice of $Q$ in the finite set $X^\circ(\bar\QQ)\cap \Gamma'_{\epsilon_2}$. For each $Q$, there is a minimum $M=M(Q)\geq 0$ such that 
	\[
	\#\{P\in X^\circ(\bar\QQ)\cap \Gamma'_{\epsilon_2}:\hat h(P-Q)> M\}\leq c_3^{2+\rho}.
	\]
	By the finiteness, we can pick the smallest $M_0$ and assume $M_0=M(Q_0)$ for some $Q_0\in X^\circ(\bar\QQ)\cap \Gamma'_{\epsilon_2}$. We are going to show that $M_0$ is bounded by a constant multiple of $\max\{1,\hfal(A)\}$, with the constant only related to $r,g,d,l$.
		
	Consider the set $W:=\{P\in  X^\circ(\bar\QQ)\cap \Gamma'_{\epsilon_2}:\hat h(P-Q_0)\leq M_0\}$. Then $W$ is contained in the $\epsilon_2$-neighborhood of the $(1+\rho)$-dimensional ball of radius $\sqrt{M_0}$ centered at $Q_0$ in the vector space $\left\langle\Gamma,Q_0\right\rangle\otimes\RR$. In particular, by Lemma \ref{lemma_cover_by_balls}, $W$ can be covered by at most $(1+8\sqrt{N_1})^{1+\rho}$ many $\epsilon_2$-neighborhood of small balls of radius $\frac{\sqrt{M_0}}{4\sqrt{N_1}}$, centered in $\left\langle\Gamma,Q_0\right\rangle\otimes\RR$.
	 Assume that 
	\begin{equation}\label{eqn_pigeonhole}
	\#(X^\circ(\bar\QQ\cap \Gamma'_{\epsilon_2})> c_3^{2+\rho}+(N-1)\cdot (1+8\sqrt{N_1})^{1+\rho}.
	\end{equation}Then $\#W>(N-1)\cdot (1+8\sqrt{N_1})^{1+\rho}$. By the Pigeonhole principle, there exists one $\epsilon_2$-neighborhood of a small ball (call it $D_{\epsilon_2}$) that contains at least $N$ points in $W$. Assume that $P_1,...,P_N\in D_{\epsilon_2}$ are distinct. Then
	\[
	|P_i-P_1|\leq 2\cdot \frac{\sqrt{M_0}}{4\sqrt{N_1}} + 2\sqrt{\epsilon_2}
	\]for any $1\leq i\leq N$. Then by \eqref{eqn_remove_hx}, we see that
	\[
	M(P_1)\leq N_1 \cdot (2\cdot \frac{\sqrt{M_0}}{4\sqrt{N_1}} + 2\epsilon_2)^2+N_2\max\{1,\hfal(A)\}.
	\] So by our choice of $M_0$, we get
	\[
	M_0\leq N_1 \cdot 2(\frac{M_0}{4N_1}+4\epsilon_2)+N_2\max\{1,\hfal(A)\},
	\]from which we derive
	\begin{equation}\label{eqn_m0}
		M_0\leq 16N_1\cdot\epsilon_2+2N_2\max\{1,\hfal(A)\}.
	\end{equation}
	
	Finally, simply notice that the right hand side of \eqref{eqn_pigeonhole} can be bounded by $c_4^{1+\rho}$ for some $c_4=c_4(r,g,d,l)$ and the right hand side of \eqref{eqn_m0} can be bounded by $c_5\max\{1,\hfal(A)\}$ for some $c_5=c_5(r,g,d,l)$. 
\end{proof}

\medskip
\section{Small Points}\label{section_small_points}
We say an irreducible subvariety $X\subseteq A$ \emph{generates} $A$, if $X-X$ is not contained in any proper abelian subvariety of $A$. We need the following \emph{New Gap Principle} to study small points. 
\begin{theorem}\cite[Theorem 1.2]{GGK} \label{NGP}
	 For any irreducible subvariety $X\subseteq A$ that generates $A$, there exist constants $c_6=c_6(g,d)>0$ and $c_7=c_7(g,d)>0$ such that the set
	\[
	\Sigma:=\left\{P\in X^\circ(\bar\QQ):\hat h(P)\leq c_6\max\{1,\hfal(A)\}\right\}
	\]is contained in some Zariski closed subset $X'\subsetneq X$ with $\deg_L(X')<c_7$. 
\end{theorem}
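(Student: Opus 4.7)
The proof would follow the blueprint of Dimitrov--Gao--Habegger for the uniform Mordell conjecture, extended by K\"uhne to the Bogomolov setting, working in a universal family whose base depends only on $g$ and $d$.

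First, I would set up a moduli framework: after fixing an auxiliary level structure, take the Hilbert scheme component $S$ parametrizing pairs $(A,X)$ with $A$ a principally polarized $g$-dimensional abelian variety and $X\subseteq A$ an irreducible $A$-generating subvariety of degree $d$. The dimension of $S$ is bounded in terms of $g,d$, and there is a universal family $\calX\subseteq \calA\to S$. For $M=\dim X + 1$, consider the iterated Faltings--Zhang difference morphism
\[
\Phi_M\colon \calX^M/S \longrightarrow \calA^{M-1}/S,\qquad (x_1,\ldots,x_M)\mapsto(x_2-x_1,\ldots,x_M-x_1).
\]
By Gao's non-degeneracy theorem for subvarieties generating their ambient abelian varieties, the pullback of the relative Betti form is non-degenerate on a Zariski open subset $U\subseteq \calX^M$, and the complement $Z:=\calX^M\setminus U$ is a closed subscheme whose fiberwise degree is bounded by a constant depending only on $g,d$.

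The key input is then K\"uhne's height inequality: on $U$ one has
\[
\sum_{i=1}^M \hat h_L(x_i)\geq c_6\,\hfal(A_s)-c''
\]
uniformly on $S$, with $c_6,c''$ depending only on $g,d$. Consequently, if $P_1,\ldots,P_M\in X^\circ(\bar\QQ)$ all lie in the small-points set $\Sigma$, then (for an appropriate choice of $c_6$) the tuple $(P_1,\ldots,P_M)$ is forced to land in $Z_s$. Projecting $Z_s\cap (X^\circ)^M$ to the first coordinate yields a closed subset $X'\subseteq X$ containing $\Sigma$, whose degree is bounded by the fiberwise degree of $Z$ times a combinatorial factor, giving $\deg_L X'<c_7=c_7(g,d)$.

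The main obstacle is twofold. First, Gao's non-degeneracy must actually produce a non-empty $U$ meeting $(X^\circ)^M$; this rests on the hypothesis that $X$ generates $A$, which permits one to choose an $M$-tuple of generic points of $X^\circ$ forming a non-degenerate configuration. Second, and more subtle, one must ensure that the first-coordinate projection of $Z_s\cap (X^\circ)^M$ is a \emph{proper} subvariety of $X$, rather than all of $X^\circ$. This again reduces to Betti non-degeneracy being generic on $X^\circ$, and typically requires a recursive argument on $\dim X$, exploiting Kawamata's result that $\Sp(X)$ is a proper closed subvariety together with a careful Hilbert-scheme degree bound to keep $c_7$ independent of the fiber $s\in S$.
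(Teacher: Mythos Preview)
The paper does not prove this statement at all: Theorem~\ref{NGP} is quoted verbatim from \cite[Theorem~1.2]{GGK} and used as a black box in the small-points part of the argument. There is nothing in the present paper to compare your proposal against.

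That said, your sketch is a faithful outline of how the result is obtained in \cite{GGK}: the universal family over a moduli space depending only on $(g,d)$, the Faltings--Zhang map into fibered powers of the universal abelian scheme, Gao's bigness/non-degeneracy of the Betti form on a Zariski open $U$, and K\"uhne's uniform height inequality on $U$ are precisely the ingredients. Your identification of the two obstacles is also accurate, and the second one is the genuine crux. One point to sharpen: the passage from ``$\Sigma^M\subseteq Z_s$'' to ``$\Sigma\subseteq X'$ with $\deg_L X'<c_7$'' is not literally a projection to the first coordinate (that projection could a priori be all of $X$). Rather, one fixes $M-1$ points of $\Sigma$ and slices $Z_s$ along them; the non-degeneracy statement guarantees that for a suitable choice this slice is a \emph{proper} closed subset of $X$ of bounded degree containing $\Sigma$. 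Making that choice uniform in $s$ is what forces the recursive/Hilbert-scheme bookkeeping you allude to.
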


\begin{cor}\label{cor_points_close}
	Assume Theorem \ref{uniform_poonen} holds for $\dim X\leq r-1$. In the case of $\dim X=r$ with $X$ generating $A$, there exist constants $\epsilon_3=\epsilon_3(r,g,d)$ and $c_8=c_8(r,g,d)$ such that for any finitely generated subgroup $\Gamma\leq A(\bar\QQ)$ of rank $\rho$ and any $Q\in A(\bar\QQ)$, we have
	\[
	\#\left\{P\in X^\circ(\bar\QQ)\cap \Gamma'_{\epsilon_3}:\hat h(P-Q)\leq c_6\max\{1,\hfal(A)\}\right\}\leq c_8^{1+\rho}.
	\]
\end{cor}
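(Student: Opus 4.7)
The plan is to reduce this to the inductive hypothesis, Theorem~\ref{uniform_poonen} in dimension $\leq r-1$, by applying the New Gap Principle (Theorem~\ref{NGP}) to the translated variety $X-Q$ and then controlling the resulting small-point set via induction on the proper closed subset that $\Sigma$ lives in.

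First I would observe that $X-Q$ is still an irreducible subvariety of $A$ which generates $A$, since $(X-Q)-(X-Q)=X-X$, and has the same degree $d=\deg_L X$ (as $L$ is symmetric and translation preserves intersection numbers up to algebraic equivalence). Applying Theorem~\ref{NGP} to $X-Q$ yields constants $c_6=c_6(g,d)$ and $c_7=c_7(g,d)$, together with a proper Zariski closed subset $X''\subsetneq X-Q$ of dimension $\leq r-1$ and $\deg_L X''<c_7$, containing
\[
\{P'\in (X-Q)^\circ(\bar\QQ):\hat h(P')\leq c_6\max\{1,\hfal(A)\}\}.
\]

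Next, for each $P$ in the set we want to bound, I would set $P':=P-Q$. Since translation sends $X^\circ$ to $(X-Q)^\circ$ and preserves the height $\hat h$, the point $P'$ lies in the small-point set above, so $P'\in X''$. It also lies in $\Gamma'_{\epsilon_3}-Q\subseteq\langle\Gamma,Q\rangle'_{\epsilon_3}$, where the group $\langle\Gamma,Q\rangle$ has rank $\leq\rho+1$. The crucial point is to verify $P'\in(X'')^\circ$: any positive-dimensional coset $C\subseteq X''$ is automatically a positive-dimensional coset in $X-Q$, so $C\subseteq\Sp(X-Q)$; but $P'\in(X-Q)^\circ$ means $P'\notin\Sp(X-Q)$, hence $P'\notin C$. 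This shows $\Sp(X'')\subseteq\Sp(X-Q)\cap X''$, and therefore $P'\in(X'')^\circ$.

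Finally, I would set $\epsilon_3:=\epsilon(r-1,g,\lceil c_7\rceil)$ and apply the inductive hypothesis to the (possibly reducible, possibly mixed-dimensional) subvariety $X''$ together with the group $\langle\Gamma,Q\rangle$, using Remark~\ref{remark1}(3) to handle reducibility. This gives
\[
\#(X'')^\circ\cap\langle\Gamma,Q\rangle'_{\epsilon_3}\leq c(r-1,g,\lceil c_7\rceil)^{2+\rho}.
\]
Setting $c_8:=c(r-1,g,\lceil c_7\rceil)^2$ yields $c_8^{1+\rho}\geq c(r-1,g,\lceil c_7\rceil)^{2+\rho}$ because $2+\rho\leq 2(1+\rho)$, and both $\epsilon_3$ and $c_8$ depend only on $r,g,d$, as required.

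The main subtlety (rather than a genuine obstacle) is the verification that $P-Q\in(X'')^\circ$ and not merely in $X''$; this is essential because Theorem~\ref{uniform_poonen} bounds only the intersection with the open locus $(X'')^\circ$. Once this is in place, the rest is a direct application of the New Gap Principle and the inductive hypothesis, with the rank inflation by $1$ (due to adjoining $Q$) absorbed into the exponent $1+\rho$ by squaring the inductive constant.
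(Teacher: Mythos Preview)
Your proof is correct and follows essentially the same route as the paper: apply the New Gap Principle to confine the small points to a proper closed subset of degree $<c_7$, observe that these points lie in the open locus of that subset (via $\Sp(X'')\subseteq\Sp(X-Q)$), enlarge $\Gamma$ to $\langle\Gamma,Q\rangle$ at the cost of one extra rank, and invoke the inductive hypothesis with $c_8=c(r-1,g,c_7)^2$. The only cosmetic difference is that you make the translation by $-Q$ explicit (applying Theorem~\ref{NGP} to $X-Q$ and working with $X''\subsetneq X-Q$), whereas the paper phrases it directly for $X$ with $X'_Q\subsetneq X$; these are the same argument up to the bijection $P\mapsto P-Q$.
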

\begin{proof}
	Take $\epsilon_3:=\epsilon(r-1,g,c_7)$ and $c_8=c(r-1,g,c_7)^2$.
	Note that for any subvariety $X'\subseteq X$, we have $X^\circ\cap X'\subseteq(X')^\circ$, since the special locus of $X$ contains the special locus of $X'$ by definition. By Theorem \ref{NGP}, the set
	\[
	\Sigma_Q:=\{P\in X^\circ(\bar\QQ):\hat h(P-Q)\leq c_6\max\{1,\hfal(A)\}\}
	\]is contained in some $X'_Q\subsetneq X$ with $\deg_L(X'_Q)<c_7$. By the induction hypothesis, we have
	\[
	\#\left((X'_Q)^\circ(\bar\QQ)\cap \left\langle\Gamma, Q\right\rangle'_{\epsilon_3}\right)\leq c(r-1,g,c_7)^{2+\rho}\leq c_8^{1+\rho}
	\]for some $c_8=c_8(r,g,d)>0$. In particular, we get $\#(\Sigma_Q\cap\Gamma'_{\epsilon_3})\leq c_8^{1+\rho}$ as a subset.
\end{proof}

\medskip

\section{Proof of Theorem \ref{uniform_poonen}}\label{section_proof_of_theorem}
We will construct $\epsilon(r,g,d)$ and $c(r,g,d)$ inductively on $\dim X$. 

For $\dim X=0$, take $\epsilon(0,g,d)=\infty $ and $c(0,g,d)=d$. Then the theorem holds trivially, and the assumption in Remark \ref{remark1}(3) is satisfied.

Assume the theorem holds for $\dim X\leq r-1$. Consider the case when $\dim X=r$. Note that we can assume without loss of generality that $X$ generates $A$. Indeed, if we can prove the case when $X$ generates $A$, simply replace $A$ by the abelian subvariety $A'$ generated by $X$ and $\Gamma$ by $\Gamma\cap A'(\bar\QQ)$, so that $g$ and $\rho$ decrease, and the result follows trivially.

Let $\epsilon_2,\epsilon_3,c_3,c_4,c_5,c_6,c_8$ be as in Proposition \ref{prop_hx_removed} and Corollary \ref{cor_points_close}. Let \[\epsilon:=\min\{\epsilon_2,\epsilon_3,\frac{1}{16}c_6\}\]. Assume $\# X^\circ(\bar\QQ)\cap \Gamma'_{\epsilon}>c_4^{1+\rho}$. Then by Proposition \ref{prop_hx_removed}, there exists $Q_0\in X^\circ\cap \Gamma$ such that
\[
\#\left\{P\in X^\circ(\bar\QQ)\cap\Gamma'_{\epsilon}:\hat h(P-Q_0)> c_5\max \{1,\hfal(A)\}\right\}\leq c_3^{2+\rho}.
\]
Consider the complement $\Sigma:=\left\{P\in X^\circ(\bar\QQ)\cap \Gamma'_\epsilon :\hat h(P-Q_0)\leq c_5\max\{1,\hfal(A)\}\right\}$. Note that $\Sigma$ is contained in the $\epsilon$-neighborhood of the $(1+\rho)$-dimensional ball of radius 
\[\sqrt{c_5\max\{1,\hfal(A)\}}\]
centered at $Q_0$ in the vector space $\left\langle \Gamma,Q_0 \right\rangle\otimes \RR$. Cover $\Sigma$ using $\epsilon$-neighborhoods of small balls of radius
$ \frac{1}{4}\sqrt{c_6\max\{1,\hfal(A)\}}$ centered in $\left\langle \Gamma,Q_0 \right\rangle\otimes \RR$.
By Lemma \ref{lemma_cover_by_balls}, $\Sigma$ can be covered by $(1+8\sqrt{\frac{c_5}{c_6}})^{1+\rho}$ such neighborhoods of the small balls.  For $P_1,P_2$ in a same $\epsilon$-neighborhood, we have
\[
\hat h(P_1-P_2)\leq \left(2\sqrt{\epsilon}+2\cdot \frac{1}{4}\sqrt{c_6\max\{1,\hfal(A)\}}\right)^2\leq  c_6\max\{1,\hfal(A)\}.
\]Hence by Corollary \ref{cor_points_close}, there are at most $c_8^{1+\rho}$ points of $X^\circ\cap\Gamma'_{\epsilon}$ in one $\epsilon$-neighborhood. So
\[
\#(X^\circ(\bar\QQ)\cap\Gamma'_{\epsilon})\leq \#\Sigma +c_3^{2+\rho} \leq \left(1+8\sqrt{\frac{c_5}{c_6}}\right)^{1+\rho}\cdot c_8^{1+\rho}+c_3^{2+\rho}\leq c^{1+\rho}
\]for some $c=c(r,g,d,l)\geq c_4$.

Finally since $X$ generates $A$, the degree $l$ of $A$ is actually bounded by a function of the degree $d$ of $X$ and dimension $g$ of $A$, see \cite[\S2]{GGK}. So we can remove the dependence on $l$ and we are done.

\medskip

\section{Finiteness of Cosets}\label{section_finiteness_of_cosets}
In this section, we show that Theorem \ref{uniform_poonen} can be improved to Theorem \ref{uniform_poonen2}, to include the counting of the positive dimensional cosets. The idea is the same as \cite[Lemma 10.4]{Gao_survey}. Basically, we need to bound the degrees of subvarieties in the special locus and use induction.

\begin{proof}[Proof of Theorem \ref{uniform_poonen2}]
	 Without loss of generality, assume $X$ generates $A$. Let $\Sigma(X)$ be the set of positive dimensional abelian subvarieties $B\subseteq A$ such that there is $x\in X(\bar\QQ)$ satisfying $x+B\subseteq X$, and $B$ is maximal for $x$. Bogomolov \cite[Theorem 1]{Bogomolov_1981} showes that there is an upper bound $\delta_1=\delta_1(g,d)$ for the degree of $B\in \Sigma(X)$. R\'emond \cite[Proposition 4.1]{remond-decompte} proves that there is $N_1:=N_1(g,l,\delta_1(g,d))$ such that $\#\Sigma(X)\leq N_1$.
	 
	 The key idea is to take a complement $B^\perp$ of $B$, such that $B+B^\perp=A$ and $B\cap B^\perp$ is finite. It is possible to choose such a $B^\perp$ with degree at most $\delta_2=\delta_2(g,d,l)$, see \cite{MasWus}. The $B^\perp$ will serve as a substitute for $A/B$. Write $(X:B):=\{x\in X: x+B\subseteq X\}$. Note that (recall $r=\dim X$)
	 \[
	 (X:B)=\bigcap_{b\in B}(X+b)=\bigcap_{i=0}^{r} (X+b_i)
	 \]if we choose $b_0,\ldots,b_r\in B$ in a general position, for dimension reason. Then we let $X_B:=(X:B)\cap B^\perp.$ We have $X_B+B=(X:B)$. By B\'ezout's theorem, the degree of $X_B$ is bounded by $d^{r+1}\cdot\delta_2\leq \delta_3=\delta_3(g,d,l)$. Note also that $\Sp(X)$ can be written as a union 
	 \[
	 \Sp(X)=\bigcup_{B\in \Sigma(X)}(X:B)=\bigcup_{B\in \Sigma(X)}(X_B^\circ+B),
	 \] where the second inequality uses the fact that if $x+B'\subseteq X_B$, then \[x+(B+B')\subseteq (X:(B+B')).\]
	 
	 Take any finitely generated subgroup $\Gamma\leq A(\bar\QQ)$ of rank $\rho$. For each $B\in \Sigma(X)$, we define $\Gamma_B\subseteq B^\perp(\bar\QQ)$ to be the pullback of $\Gamma+B/B$ under the isogeny $B^\perp\rightarrow A/B$. Then $\Gamma_B$ is of the same rank. Note that $(X_B^\circ+B)(\bar\QQ)\cap\Gamma'_{\epsilon}\subseteq (X_B^\circ(\bar\QQ)\cap \Gamma_{B,\epsilon}')+B$.
	 
	 Applying Theorem \ref{uniform_poonen} to $X_B$ and $\Gamma_B$, we get $\epsilon=\epsilon(g,\delta_3), c=c(g,\delta_3)$ such that (recall that we choose the constants in a way that they also work for reducible varieties) $\#(X_B^\circ(\bar\QQ)\cap \Gamma'_{B,\epsilon})\leq c^{1+\rho}.$ Then we have
	 \[
	 \Sp(X)(\bar\QQ)\cap \Gamma'_\epsilon\subseteq\bigcup_{B\in \Sigma(X)}\left[(X_B^\circ(\bar\QQ)\cap \Gamma_{B,\epsilon}')+B\right],
	 \] where the right hand side is the union of at most $\#\Sigma(X)\cdot c^{1+\rho}\leq N_1\cdot c^{1+\rho}$ cosets. Since $X$ generates $A$, we could bound $l$ in terms of $g$ and $d$ and hence remove $l$ in a trivial way. So we are done by simply combining the above result on $\Sp(X)$ with the result on $X^\circ$.
	 \end{proof}
	 
\medskip	 

\section{Further comments}\label{section_further comments}
In Theorem \ref{uniform_poonen}, $\epsilon$ is only related to the dimension of the abelian variety $A$ and the degree of the subvariety $X$. On the other hand, as suggested by the New Gap Principle \ref{NGP}, the "generic" distance between two points on $X$ is proportional to $\max\{1,\hfal(B)\}$, where $B$ is the abelian subvariety generated by $X$ (recall from \S\ref{section_small_points} that this means $X-X$ is not contained in any proper abelian subvariety of $A$). The exact same method (except in Proposition \ref{prop_finiteness}, one needs to invoke the New Gap Principle \ref{NGP}) can be used to show that Theorem \ref{uniform_poonen} is true with $\epsilon$ replaced by 
\[
\epsilon\cdot\max\{1,\inf_{B\subseteq A}\hfal(B)\},
\]where the infimum is taken over all positive-dimensional abelian subvarieties $B$ of $A$. The method does not work without taking the infimum above, since one has no control over $X'$ in the New Gap Principle \eqref{NGP}.

We cannot in general hope $\epsilon$ to be replaced by an even stronger form $\epsilon\cdot\max\{1,\hfal(A)\}$. A counterexample may be easily constructed: consider $X\times \{0\}\subseteq A\times B$ with $A$ fixed and $B$ varying of the same dimension. Then the degree of $X\times\{0\}$ and the dimension of $A\times B$ are fixed, but the Faltings height of $A\times B$ has no bound. A possible way to get around is to only consider the points that are ``transverse'', as suggested by the referee. \Tangli{Thanks for the comments! I rewrite this paragraph.}

We might also consider the set
\[
\{x\in X^\circ(\bar\QQ):d(x,\Gamma\otimes\RR)\leq \alpha|x|+\beta\}
\]with $\alpha,\beta$ positive constants and $d(x,\Gamma\otimes\RR)$ denoting the distance from $x$ to any $\RR$-linear combination of vectors of $\Gamma$, as in \cite[Theorem 1.3]{Zhang00}. What we showed is the existence of a uniform $\beta$ with $\alpha=0$ to make this set uniformly bounded in terms of the rank. It would be interesting to investigate whether we can pick positive $\alpha,\beta$ uniformly.

\section*{Acknowledgement}
I would like to thank my advisor Dan Abramovich for encouraging me to write the paper and thank Ziyang Gao for many helpful conversations. I thank Niki Myrto Mavraki for bringing Zhang's paper to my attention. I thank the referees for suggesting various improvements.\Tangli{Thank you for the help!}

\bibliographystyle{plain}
\bibliography{Arithmeticgeometry}
\end{document}